\newcommand{\bea}{\begin{eqnarray}}
\newcommand{\eea}{\end{eqnarray}}
\newcommand{\bna}{\begin{eqnarray*}}
\newcommand{\ena}{\end{eqnarray*}}
\numberwithin{equation}{section}
\theoremstyle{plain}
\newtheorem{lemma}{Lemma}[section]
\newtheorem{theorem}[lemma]{Theorem}
\theoremstyle{definition}
\newtheorem{remark}{Remark}
\begin{document}
	
\title{Uniform subconvex bounds for Rankin-Selberg $L$-functions}

\author{Qingfeng Sun}
\address{School of Mathematics and Statistics, Shandong University,
Weihai\\Weihai, Shandong 264209, China}
\email{qfsun@sdu.edu.cn}

\date{}

\begin{abstract}
Let $f$ be a Maass cusp form for $\rm SL_2(\mathbb{Z})$ with
Laplace eigenvalue $1/4+\mu_f^2$, $\mu_f>0$.
Let $g$ be an arbitrary but fixed holomorphic or Maass cusp form
for $\rm SL_2(\mathbb{Z})$. In this paper, we establish the
following uniform subconvexity
bound for the Rankin-Selberg $L$-function $L(s,f\otimes g)$
$$
L\left(1/2+it,f\otimes g\right)\ll (\mu_f+|t|)^{9/10+\varepsilon},
$$
where the implied constant depends only on $\varepsilon$ and $g$.

\end{abstract}
\thanks{(The author is partially
  supported by the National Natural Science Foundation
  of China (Grant Nos. 11871306 and 12031008)}
	
	\keywords{Rankin-Selberg $L$-function, uniform
subconvexity}
	
	\subjclass[2010]{11F30, 11L07, 11F66, 11M41}
	\maketitle
	
\section{Introduction}\label{introduction}

The subconvexity problems of Rankin-Selberg $L$-functions in various aspects are
of great interest and importance and
have been intensively studied
by many authors (see \cite{KMV}, \cite{M}, \cite{HM}, \cite{LLY}, \cite{MV}, \cite{BR}, \cite{HT}, \cite{ASS}, \cite{BJN}, \cite{N} and the references therein). For the Rankin-Selberg $L$-function
$L(s,\pi_1\otimes \pi_2)$ associated to two irreducible cuspidal
automorphic representations $\pi_1$ and $\pi_2$ of $\rm GL_2$ with analytic conductor
$\mathcal{Q}(s,\pi_1\otimes \pi_2)$, the subconvexity problem of $L(s,\pi_1\otimes \pi_2)$
is aimed at obtaining estimates of the form
$L(s,\pi_1\otimes \pi_2)\ll \mathcal{Q}(s,\pi_1\otimes \pi_2)^{1/4-\delta}$
for some $\delta>0$ when $\mathrm{Re}(s)=1/2$, while the estimate
$L(s,\pi_1\otimes \pi_2)\ll \mathcal{Q}(s,\pi_1\otimes \pi_2)^{1/4+\varepsilon}$
with $\varepsilon>0$ arbitrarily small, which follows from the functional equation and
the Phragm\'{e}n-Lindel\"{o}f principle, is referred to as the convexity bound.

Let $f$ be a Maass cusp form for $\rm SL_2(\mathbb{Z})$ with normalized Fourier coefficients
$\lambda_f(n)$ and 
Laplace eigenvalue $1/4+\mu_f^2$, $\mu_f>0$.
Let $g$ be an arbitrary but fixed holomorphic or Maass cusp form
for $\rm SL_2(\mathbb{Z})$ with normalized Fourier coefficients
$\lambda_g(n)$.
We consider the Rankin-Selberg $L$-function
\bna
L\left(s,f\otimes g\right)=\zeta(2s)\sum_{n=1}^{\infty}
\frac{\lambda_f(n)\lambda_g(n)}{n^{s}},
\ena
where $\mbox{Re}(s)>1$ and $\zeta(s)$ is the Riemann zeta-function.
In this paper, we are concerned with
uniform subconvex estimates for the Rankin-Selberg $L$-function $L(s,f\otimes g)$
in the $t$ and $\mu_f$ aspects. In this case, the analytic conductor is
$(\mu_f+|t|+1)^2(|\mu_f-|t||+1)^2$ and the convexity bound is
$O\left((\mu_f+|t|+1)^{1/2+\varepsilon}(|\mu_f-|t||+1)^{1/2}\right)$.
The Lindel\"{o}f hypothesis asserts that
$$
L(1/2+it,f\otimes g) \ll_{g,\varepsilon} (\mu_f+|t|+1)^{\varepsilon}
$$
which is still out of reach at the present.
In 2006, Jutila and Motohashi \cite{JM} proved that
\bea\label{JM}
  L(1/2+it,f\otimes g) \ll_{g,\varepsilon} \left\{
  \begin{array}{ll}
    \mu_f^{2/3+\varepsilon}, & \textrm{for } 0\leq t\ll \mu_f^{2/3},\\
    \mu_f^{1/2}t^{1/4+\varepsilon}, & \textrm{for } \mu_f^{2/3}\leq t\ll t_f,\\
    t^{3/4+\varepsilon}, & \textrm{for } \mu_f \ll t\ll \mu_f^{3/2-\varepsilon},
  \end{array}\right.
\eea
which, however, does not cover all cases of $t$ and $\mu_f$.

Our main result states as follows.
\begin{theorem}\label{subconvexity} We have
\bna
L\left(1/2+it,f\otimes g\right)\ll (t+\mu_f)^{9/10+\varepsilon},
\ena
where the implied constant depends only on $\varepsilon$ and $g$.
\end{theorem}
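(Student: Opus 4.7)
The plan is to employ the DFI delta-method combined with $\mathrm{GL}(2)$ Voronoi summation on both the $\lambda_f$ and $\lambda_g$ sums, and to carry out the stationary phase analysis uniformly in the spectral parameters. Throughout, write $T = \mu_f + |t|$.

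First, a smoothed approximate functional equation together with a dyadic decomposition reduces the theorem to bounding
\[
S(N) = \sum_{n \geq 1} \lambda_f(n)\,\lambda_g(n)\,n^{-it}\,V(n/N)
\]
by $N^{1/2}\, T^{9/10+\varepsilon}$, uniformly for smooth $V$ supported in $[1,2]$ and $N \leq T^{2+\varepsilon}$. The range $N \asymp T^{2}$ is the decisive case.

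Second, I would open $S(N)$ as a two-variable sum by inserting $\delta(n-m)$ and replace the delta symbol by a DFI-type expansion over moduli $q \sim Q$ with additive characters, for an optimizable parameter $Q$. To the resulting $n$-sum I would apply the $\mathrm{GL}(2)$ Voronoi summation formula for $f$ modulo $q$, producing a dual sum of length $\asymp q^2/N$ together with a Bessel kernel $J_{2i\mu_f}^{\pm}$; similarly I would apply Voronoi for $g$ to the $m$-sum. A stationary phase analysis of the resulting oscillatory integrals, taking the additional twist $n^{-it}$ into account, extracts clean oscillatory factors whose phases involve $t$, $\mu_f$, and the dualized variables, while the amplitudes become essentially constant.

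Third, opening the Kloosterman-type character sums that emerge, applying Cauchy--Schwarz in the dualized $\lambda_f$-variable, and then using Poisson summation together with the Weil bound on the resulting twisted exponential sums reduces matters to counting solutions of a congruence with an oscillatory weight. An optimization of $Q$ that balances the diagonal contribution after Cauchy--Schwarz against the off-diagonal contribution is what delivers the exponent $9/10$.

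The principal difficulty will be the uniform stationary phase analysis of the Bessel kernel $J_{2i\mu_f}^{\pm}$ across the full range of $\mu_f$ and $|t|$. The oscillatory character of $J_{2i\mu_f}^{\pm}(x)$ changes near $x \asymp \mu_f^2$, and this transition interacts delicately with the phase $n^{-it}$: in the critical band $\mu_f \asymp |t|$, where the analytic conductor drops, stationary points of the combined integrand can coalesce or disappear, so the integrals must be analyzed piecewise. Ensuring that the uniform exponent $9/10+\varepsilon$ survives across all these ranges under a single optimized choice of $Q$, in particular so that no range forces a worse exponent than the generic one, is the most delicate point of the argument.
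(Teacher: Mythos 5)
Your high-level skeleton matches the paper's: approximate functional equation, DFI delta-method, Voronoi on both $\lambda_f$- and $\lambda_g$-sums, Cauchy--Schwarz in the dualized $\lambda_f$-variable, Poisson summation, and optimization of $Q$. But there are two substantive gaps that carry most of the actual difficulty.

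First, the arithmetic structure. After Voronoi is applied to both sums, the unit sum over $a \bmod q$ becomes $\sum_{a}^{*} e((n\pm m)\bar a/q)$, i.e.\ a Ramanujan sum $S(m\pm n,0;q)=\sum_{d\mid(m\pm n,q)}d\,\mu(q/d)$ --- not a Kloosterman sum. No Weil bound is needed; the divisor $d$ simply forces a congruence condition that is carried through Cauchy--Schwarz and Poisson. Planning around Kloosterman sums and Weil is not what happens here, and getting Ramanujan sums is in fact the whole point of doing Voronoi twice.

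Second, and more seriously, you correctly identify the uniform stationary-phase analysis of the $\mathrm{GL}(2)$ kernel across all ranges of $\mu_f$ and $|t|$ as the principal difficulty, but you offer no mechanism to handle it, and the transition region $\mu_f\asymp |t|$ that you flag is precisely where a naive Bessel-function analysis breaks. The paper's resolution is to use the Miller--Schmid Mellin--Barnes form of the $f$-Voronoi formula (Lemma \ref{GL2 Voronoi formula-version 2}), so that the integral transform is expressed through $\gamma_f^{\pm}(\sigma+i\tau-it)$, in which $t$ and $\pm\mu_f$ enter linearly as $T_1=t+\mu_f$ and $T_2=t-\mu_f$. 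Stirling's formula then gives uniform asymptotics, and the resulting $\Psi_\varphi^{\pm}$ is analyzed in three regimes according to the size of $B=2\sqrt{nN}/q$ relative to $|T_2|$ and $T_1$ (Lemma \ref{integral-lemma-1}), including the degenerate range $|T_2|^{1-\varepsilon}\ll B\ll T_1^{1+\varepsilon}$ where stationary points coalesce; there one falls back to the second-derivative test. There is also an intermediate stationary-phase step on the archimedean $\zeta$-integral after the first Voronoi, producing the clean phase $e(2\sqrt{mn}/q)$ with an inert amplitude, without which the second Voronoi application would not be tractable; your outline skips this. As a smaller point, after the first Voronoi the dual $n$-length is $\asymp N/Q^2$ (because of the extra oscillation $e(n\zeta/(qQ))$ from the delta-method), not $q^2/N$; this matters for the final optimization giving $Q=N^{1/2}/(T_1|T_2|)^{1/5}$ and hence the exponent $9/10$.
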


Uniform subconvexity estimates in the $t$ and spectral aspect for $L$-values
have proven to be very difficult to establish
through current methods although there have been a few results.
For the $\rm GL_2$ $L$-function $L(s,f)$, Jutila and Motohashi \cite{JM1}
established by the moment method the uniform subconvexity bound
$$
  L(1/2+it,f)\ll_\varepsilon (\mu_f+|t|)^{1/3+\varepsilon},
$$
and then extended their result to $L\left(1/2+it,f\otimes g\right)$ (see \eqref{JM}).
Recently, Huang \cite{HB2} used the method of Munshi \cite{Mun1} to study the
$\rm GL_3\times \rm GL_2$ case and proved that
$$
  L(1/2+it,\pi\otimes f)\ll_{\pi,\varepsilon} (\mu_f+|t|)^{27/20+\varepsilon},
$$
where $f$ is as before and $\pi$ is a Hecke-Maass cusp form for $\rm SL_3(\mathbb{Z})$.
It is worth noting that in terms of $t$-aspect alone the best record bound
for $L\left(1/2+it,f\otimes g\right)$
is the Weyl type bound
$L\left(1/2+it,f\otimes g\right)\ll (1+|t|)^{2/3+\varepsilon}$ due to
Blomer, Jana and Nelson \cite{BJN} by combining in
a substantial way representation theory, local harmonic analysis, and analytic number theory.

\medskip
The paper is organized as follows. In Section \ref{sketch-of-proof}, we
provide a quick sketch and key steps of the proof. In
Section \ref{review-of-cuspform}, we review some basic materials of
automorphic forms on $ \rm GL_2$ and estimates on exponential integrals.
Sections \ref{details-of-proof}
and \ref{proofs-of-technical-lemma} give details of the proof for Theorem \ref{subconvexity}.

\bigskip
\noindent
{\bf Notation.}
Throughout the paper, the letters $\varepsilon$ and $A$ denote arbitrarily small and large
positive constants, respectively, not necessarily the same at each occurrence.
Implied constants may depend on $\varepsilon$ as well as on $g$.
The letters $q$, $m$ and $n$, with or without subscript,
denote integers.
We use $A\asymp B$ to mean that $c_1B\leq |A|\leq c_2B$ for some positive
constants $c_1$ and $c_2$ and the symbol
$q\sim C$ means $C<q\leq 2C$.

\section{Outline of the proof}\label{sketch-of-proof}

In this section, we provide a quick sketch of the proof for Theorem \ref{subconvexity}.
For simplicity, we assume $t>0$ and $t+\mu_f\asymp t-\mu_f\asymp T$.
By the approximate functional equation, we have
\bea\label{initial}
L\left(\frac{1}{2}+it,f\otimes g\right)\ll  T^{-1+\varepsilon}\mathcal{S},
\eea
where
\bna
\mathcal{S}=\sum_{n\sim T^2}\lambda_f(n)
\lambda_g(n)n^{-it}.
\ena
The first step is writing
\bna
\mathcal{S}=\sum_{n\sim T^2}\lambda_g(n)\sum_{m\sim T^2}\lambda_f(m)
m^{-it}\delta(m-n),
\ena
and using the $\delta$-method to detect the
Kronecker delta symbol $\delta(m-n)$. As in \cite{LS}, we use
the Duke-Friedlander-Iwaniec's $\delta$-method \eqref{DFI's} to write
\bea\label{before-voronoi}
\mathcal{S}&=&\frac{1}{Q}
\sum_{q\sim Q}\frac{1}{q}\int_{-T^{\varepsilon}}^{T^{\varepsilon}}
\sideset{}{^\star}\sum_{a\bmod{q}}\,
\sum_{n\sim T^2}\lambda_g(n)e\left(-\frac{na}{q}\right)
e\left(-\frac{n\zeta}{qQ}\right)\nonumber\\
&&\sum_{m\sim T^2}\lambda_f(m)e\left(\frac{ma}{q}\right)m^{-it}
e\left(\frac{m\zeta}{qQ}\right)\mathrm{d}\zeta,
\eea
where the $\star$ in the sum over $a$ means that the sum is restricted to $(a,q)=1$.

Next we use the Voronoi summation formulas to dualize the $m$ and $n$ sums.
The $n$-sum can be transformed into the following
\bea\label{GL2dual}
&&\sum_{n\sim T^2}\lambda_g(n)e\left(-\frac{na}{q}\right)
e\left(-\frac{n\zeta}{qQ}\right)\nonumber\\
&&\leftrightarrow T
\sum_{\pm}\sum_{n\sim T^2/Q^2}\lambda_g(n)e\left(\frac{n\bar{a}}{q}\right)
\Phi^{\pm}\left(n,q,\zeta\right),
\eea
where
 \bna
\Phi^{\pm}\left(n,q,\zeta\right)=\int_{x\asymp 1} x^{-1/4}
e\left(-\frac{\zeta T^2x}{qQ}\pm\frac{2\sqrt{mT^2x}}{q}\right)\mathrm{d}x.
\ena
Repeated integration by parts shows that
$\Phi^{-}\left(n,q,\zeta\right)\ll T^{-A}$ and a stationary phase analysis shows that
(note that $n\sim T^2/Q^2$)
\bea\label{plus case}
\Phi^+\left(n,q,\zeta\right)\asymp
\frac{q^{1/2}}{(nT^2)^{1/4}}
e\left(\frac{nQ}{q\zeta}\right)
U^{\natural}\left(\frac{nQ^2}{X\zeta^2}\right)
\asymp \frac{Q}{T}
e\left(\frac{nQ}{q\zeta}\right)
U^{\natural}\left(\frac{nQ^2}{T^2\zeta^2}\right).
\eea
Plugging \eqref{GL2dual} and \eqref{plus case} into \eqref{before-voronoi}, we are led to the sum
\bna
\mathcal{S}^*=
\sum_{q\sim Q}\frac{1}{q}\;
\sideset{}{^\star}\sum_{a\bmod{q}}\,
\sum_{n\sim T^2/Q^2}\lambda_g(n)e\left(\frac{n\bar{a}}{q}\right)\sum_{m\sim T^2}\lambda_f(m)e\left(\frac{ma}{q}\right)m^{-it}
\mathcal{K}(m,n,q),
\ena
where
\bna
\mathcal{K}(m,n,q)=\int_{-T^{\varepsilon}}^{T^{\varepsilon}}
U^{\natural}\left(\frac{nQ^2}{N\zeta^2}\right)
e\left(\frac{nQ}{q\zeta}+\frac{m\zeta}{qQ}\right)\mathrm{d}\zeta.
\ena
We evaluate the integral $\mathcal{K}(m,n,q)$ using the stationary phase method and get
\bna
\mathcal{K}(y;n,q)\asymp \frac{n^{1/4}q^{1/2}Q}{T^{3/2}}e\left(\frac{2\sqrt{mn}}{q}\right)
F\left(\frac{m}{T^2}\right)\asymp \frac{Q}{T}e\left(\frac{2\sqrt{mn}}{q}\right)
F\left(\frac{m}{T^2}\right)
\ena
for some smooth compactly supported function $F(y)$. Thus
\bea\label{before Voronoi-2}
\mathcal{S}^*&=&\frac{Q}{T}
\sum_{q\sim Q}\frac{1}{q}\;
\sideset{}{^\star}\sum_{a\bmod{q}}\,
\sum_{n\sim T^2/Q^2}\lambda_g(n)e\left(\frac{n\bar{a}}{q}\right)\nonumber\\
&&\times\sum_{m\sim T^2}\lambda_f(m)e\left(\frac{ma}{q}\right)m^{-it}
F\left(\frac{m}{T^2}\right)e\left(\frac{2\sqrt{mn}}{q}\right).
\eea

Applying the Voronoi formula to the sum over $m$, we have
\bea\label{GL2dual-2}
&&\sum_{m\sim T^2}\lambda_f(m)e\left(\frac{ma}{q}\right)m^{-it}
F\left(\frac{m}{T^2}\right)e\left(\frac{2\sqrt{mn}}{q}\right)\nonumber\\
&&\leftrightarrow
q\sum_\pm\sum_{m\asymp Q^2}\frac{\lambda_f(m)}{m}e\left(\pm\frac{\overline{a}m}{q}\right)
\Psi^{\pm}\left(\frac{m}{q^2},n,q\right),
\eea
where
\bna
\Psi^{\pm}\left(x,n,q\right)\asymp x^{it}(T^2x)^{1/2}
e\bigg(-\frac{T_1}{2\pi}\log\frac{T_1}{2e}
-\frac{T_2}{2\pi}\log\frac{|T_2|}{2e}+\frac{2\tau_0 n^{1/2}T}{q}\bigg).
\ena
with
$$
\tau_0:=\tau_0(m)=\left(T_1|T_2|/(4T^2m)\right)^{1/2}q\asymp 1.
$$
Then by plugging the dual sum \eqref{GL2dual-2} back into \eqref{before Voronoi-2} and
writing the Ramanujan sum
$S\left(m-n,0;q\right)$ as $\sum_{d|(m-n,q)}d\mu(q/d)$, we roughly get
\bea\label{before Cauchy}
\mathcal{S}^*&\approx&Qe\bigg(-\frac{T_1}{2\pi}\log\frac{T_1}{2e}
-\frac{T_2}{2\pi}\log\frac{|T_2|}{2e}\bigg)
\sum_{q\sim Q}\frac{1}{q^{1+2it}}\sum_{d|q}d\mu\left(\frac{q}{d}\right)
\nonumber\\
&&\times \sum_{m\asymp Q^2}\frac{\lambda_f(m)}{m^{1/2-it}}
\sum_{n\sim T^2/Q^2}\lambda_g(n)
e\left(2\tau_0 n^{1/2}T/q\right).
\eea

To prepare for an application of the Poisson summation in the $m$-variable,
we now apply the Cauchy-Schwarz inequality to smooth the $m$-sum
and put the $n$-sum inside the absolute value squared to get
\bna
\mathcal{S}^*&\ll&\frac{1}{Q}\sum_{q\sim Q}\,\sum_{d|q}d
\bigg(\sum_{m\sim Q^2}|\lambda_f(m)|^2\bigg)^{1/2}
\bigg(\sum_{m\sim Q^2}\bigg|\sum_{n\sim T^2/Q^2\atop n\equiv \pm m\bmod d}
\lambda_g(n)e\left(2\tau_0 n^{1/2}T/q\right)\bigg|^2\bigg)^{1/2}\\
&\ll&\sum_{q\sim Q}\,\sum_{d|q}d
\bigg(\sum_{m\sim Q^2}\bigg|\sum_{n\sim T^2/Q^2\atop n\equiv \pm m\bmod d}
\lambda_g(n)\mathfrak{I}^*\left(m,n,q\right)\bigg|^2\bigg)^{1/2}.
\ena
where
\bna
\mathfrak{I}^*\left(m,n,q\right)\asymp e\left(2\tau_0 n^{1/2}T/q\right).
\ena

\begin{remark}
If we open the absolute value squared, by the Rankin-Selberg estimate
for $\lambda_f(n)$ and the trivial estimate $\mathfrak{I}^*\left(m,n,q\right)\ll 1$,
the contribution from the diagonal term
$n=n'$ is given by
\begin{equation}\label{S-diagonal}
\begin{split}
\mathcal{S}_{\text{diag}}
\ll& \sum_{q\sim Q}\,\sum_{d|q}d
\bigg(\sum_{m\sim Q^2}\sum_{n\sim T^2/Q^2\atop n\equiv \pm m\bmod d}
|\lambda_g(n)|^2\left|\mathfrak{I}^*\left(m,n,q\right)\right|^2\bigg)^{1/2}\\
\ll& Q^{3/2}T,
\end{split}
\end{equation}
which will be fine for our purpose (i.e., $\mathcal{S}_{\text{diag}}=o(T^2)$)
as long as $Q\ll T^{2/3}$.
\end{remark}

Recall $\tau_0=\left(T_1|T_2|/(4Nm)\right)^{1/2}q\asymp 1$.
Note that the oscillation in the $n$-variable
 is of size $2\tau_0n^{1/2}T/q\approx T^2/Q^2$.
So opening the absolute value squared and applying the Poisson summation formula
in the $m$-variable, we have
\begin{equation*}
\begin{split}
\sum_{m\sim Q^2\atop m\equiv \pm n\bmod d}\mathfrak{I}^*\left(m,n,q\right)
\overline{\mathfrak{I}^*\left(m,n',q\right)}
\leftrightarrow \frac{Q^2}{d}
\sum_{\tilde{m}\ll \frac{dT^2/Q^2}{Q^2}}
\,\mathcal{H}\left(\frac{\tilde{m}Q^2}{d}\right),
\end{split}
\end{equation*}
where
\bea\label{correlation-integral}
\mathcal{H}(x)=\int_{\mathbb{R}}
\mathfrak{I}^*\left(Q^2\xi,n,q\right)
\overline{\mathfrak{I}^*\left(Q^2\xi,n',q\right)}
\, e\left(-x\xi\right)\mathrm{d}\xi.
\eea

The contribution to $\mathcal{S}^*$ from the zero-frequency $\tilde{m}=0$
will roughly correspond to the diagonal contribution $\mathcal{S}_{\text{diag}}$ in \eqref{S-diagonal}.
For the non-zero frequencies from the terms with $\tilde{m}\neq 0$, we note that
by performing stationary phase analysis, when $|x|$ is ``large", the expected estimate for
the triple integral $\mathcal{H}(x)$ in \eqref{correlation-integral} is
\bea\label{expect}
\mathcal{H}(x)\ll |x|^{-1/2},
\eea
which comes from the square-root cancellation of the two inner integrals
and the square-root cancellation in the $\xi$-variable. Note that this estimate does not hold for
``small" $|x|$. In fact, for these exceptional cases
the ``trivial" bound $\mathcal{H}(x)\ll 1$ will suffice for our purpose.
(These are the content of Lemma \ref{integral:lemma}). We ignore these exceptions and
plug the expected estimate \eqref{expect} for  $\mathcal{H}(x)$
into $\mathcal{S}$. It turns out
that the non-zero frequencies contribution $\mathcal{S}_{\text{off-diag}}$ from
$\tilde{m}\neq 0$ to $\mathcal{S}$ is given by
\bna
\mathcal{S}_{\text{off-diag}}&\ll&\sum_{q\sim Q}\,\sum_{d|q}d
\bigg(\sum_{n\sim T^2/Q^2}|\lambda_g(n)|^2
\sum_{n'\sim T^2/Q^2\atop n'\equiv n\bmod d}\frac{Q^2}{d}
\sum_{0\neq \widetilde{m}\ll dT^2/Q^4}\frac{d^{1/2}}{|\widetilde{m}|^{1/2}Q}\bigg)^{1/2}\\
&\ll&T^{5/2}/Q+T^{3/2}Q^{1/2}
\ll T^{5/2}/Q
\ena
provided that $Q<T^{2/3}$.
Hence combining this with the diagonal contribution $\mathcal{S}_{\text{diag}}$ in \eqref{S-diagonal}, we get
\bna
\mathcal{S}\ll  Q^{3/2}T+T^{5/2}/Q.
\ena
Plugging this estimate into \eqref{initial}, one has
\bna
L\left(\frac{1}{2}+it,f\otimes g\right)\ll  T^{\varepsilon}\left(Q^{3/2}+T^{3/2}/Q\right)
\ena
By choosing $Q=T^{3/5}$ we conclude that
\bna
L\left(\frac{1}{2}+it,f\otimes g\right)\ll T^{9/10+\varepsilon}.
\ena

\section{Preliminaries}\label{review-of-cuspform}

First we recall some basic results on automorphic forms for $\mathrm{GL}_2$.

\subsection{Holomorphic cusp forms for $\mathrm{GL}_2$}

Let $f$ be a holomorphic cusp form of weight $\kappa$ for $\rm SL_2(\mathbb{Z})$
with Fourier expansion
\bna
f(z)=\sum_{n=1}^{\infty}\lambda_f(n)n^{(\kappa-1)/2}e(nz)
\ena
for $\mbox{Im}\,z>0$, normalized such that $\lambda_f(1)=1$.
By the Ramanujan-Petersson conjecture proved by Deligne \cite{Del},
we have
$
\lambda_f(n)\ll \tau(n)\ll n^{\varepsilon}
$
with $\tau(n)$ being the divisor function.

For $h(x)\in C_c(0,\infty)$, we set
\bea\label{intgeral transform-1}
\Phi_h(x) =2\pi i^{\kappa} \int_0^{\infty} h(y) J_{\kappa-1}(4\pi\sqrt{xy})\mathrm{d}y,
\eea
where $J_{\kappa-1}$ is the usual $J$-Bessel function of order $\kappa-1$.
We have the following Voronoi summation formula (see \cite[Theorem A.4]{KMV}).

\begin{lemma}\label{voronoiGL2-holomorphic}
Let $q\in \mathbb{N}$ and $a\in \mathbb{Z}$ be such
that $(a,q)=1$. For $X>0$, we have
\bna\label{voronoi for holomorphic}
\sum_{n=1}^{\infty}\lambda_f(n)e\left(\frac{an}{q}\right)h\left(\frac{n}{N}\right)
=\frac{N}{q} \sum_{n=1}^{\infty}\lambda_f(n)
e\left(-\frac{\overline{a}n}{q}\right)\Phi_h\left(\frac{nN}{q^2}\right),
\ena
where $\overline{a}$ denotes
the multiplicative inverse of $a$ modulo $q$.
\end{lemma}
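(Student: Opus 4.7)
The plan is to derive the formula from Mellin inversion combined with the functional equation of the additively twisted Hecke $L$-function
$$L(s, f, a/q) := \sum_{n=1}^{\infty} \frac{\lambda_f(n)\, e(an/q)}{n^s}, \qquad \mathrm{Re}(s) > 1.$$
Since $h \in C_c^\infty(0,\infty)$, its Mellin transform $\widetilde h(s) := \int_0^{\infty} h(y)\, y^{s-1}\, dy$ is entire and of rapid decay on every vertical line. Applying Mellin inversion to $h(n/N)$ and interchanging sum and integral (justified by absolute convergence for $\mathrm{Re}(s)$ sufficiently large, given Deligne's bound $\lambda_f(n) \ll n^{\varepsilon}$), I rewrite the left-hand side as
$$\frac{1}{2\pi i}\int_{(\sigma)} \widetilde h(s)\, N^s\, L(s, f, a/q)\, ds.$$

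The second step is to invoke the functional equation of $L(s, f, a/q)$. For a level-one holomorphic cusp form of weight $\kappa$ and $(a,q)=1$, this is obtained in the standard way from the modular transformation of $f$ under an element of $\mathrm{SL}_2(\mathbb{Z})$ sending $-a/q$ to the cusp $i\infty$, and takes the shape
$$\left(\frac{q}{2\pi}\right)^s\!\Gamma\!\left(s+\tfrac{\kappa-1}{2}\right)L(s,f,a/q) \;=\; i^{\kappa}\left(\frac{q}{2\pi}\right)^{1-s}\!\Gamma\!\left(1-s+\tfrac{\kappa-1}{2}\right)L(1-s,f,-\overline a/q).$$
Since $L(s,f,a/q)$ is entire (as $f$ is cuspidal), I would shift the contour from $(\sigma)$ to $(1-\sigma)$ without crossing any poles, apply the functional equation on the new line, expand $L(1-s,f,-\overline a/q)$ into its Dirichlet series, and exchange sum and integral once more. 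This produces the right-hand side of the claim with $\Phi_h$ replaced by a Mellin--Barnes kernel
$$K_h(x) \;=\; \frac{i^\kappa}{2\pi i}\int_{(1-\sigma)} \widetilde h(s)\,\frac{\Gamma(1-s+(\kappa-1)/2)}{\Gamma(s+(\kappa-1)/2)}\,(2\pi)^{2s-1}\, x^{s-1}\, ds.$$

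The main (and only non-routine) technical point is the identification $K_h = \Phi_h$. I would handle this by substituting into the definition \eqref{intgeral transform-1} the classical Mellin transform
$$\int_0^{\infty} J_{\kappa-1}(4\pi\sqrt{xy})\, y^{s-1}\, dy \;=\; \frac{\Gamma(s+(\kappa-1)/2)}{\Gamma(\tfrac{\kappa+1}{2}-s)}\,(2\pi)^{-2s}\, x^{-s},$$
valid in an appropriate strip and then extended by analytic continuation. Applying Fubini---which is legitimate after pushing the contour far enough to the right, using the rapid decay of $\widetilde h$ and Stirling's approximation to offset the polynomial growth of the Gamma ratio---one obtains a Mellin--Barnes representation of $\Phi_h$ which, under $s \mapsto 1-s$, coincides with $K_h$. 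The hard part is really this bookkeeping step: verifying absolute convergence of every interchange and choosing contours that simultaneously respect the strip of convergence of the Bessel Mellin transform and of $\widetilde h$. Everything else is dictated by the functional equation.
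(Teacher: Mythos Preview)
The paper does not prove this lemma; it simply cites \cite[Theorem A.4]{KMV}. Your sketch is the standard route to the Voronoi formula---Mellin inversion, the functional equation of the additively twisted $L$-function, contour shift, and identification of the resulting Mellin--Barnes kernel with the Bessel transform via the Weber--Schafheitlin formula---and it is correct as outlined. This is also essentially how the result is proved in \cite{KMV}, so there is no methodological difference to discuss.
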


The function $\Phi_h(x)$ has the following asymptotic expansion
when $x\gg 1$ (see \cite{LS}, Lemma 3.2).

\begin{lemma}\label{voronoiGL2-holomorphic-asymptotic}
For any fixed integer $J\geq 1$ and $x\gg 1$, we have
\bna
\Phi_h(x)=x^{-1/4} \int_0^\infty h(y)y^{-1/4}
\sum_{j=0}^{J}
\frac{c_{j} e(2 \sqrt{xy})+d_{j} e(-2 \sqrt{xy})}
{(xy)^{j/2}}\mathrm{d}y
+O_{\kappa,J}\left(x^{-J/2-3/4}\right),
 \ena
where $c_{j}$ and $d_{j}$ are constants depending on $\kappa$.
\end{lemma}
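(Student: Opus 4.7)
The plan is to obtain the claimed expansion by inserting the standard large-argument asymptotic of the Bessel function $J_{\kappa-1}$ directly into the definition \eqref{intgeral transform-1} and carrying out the elementary change of variable $z = 4\pi\sqrt{xy}$. Recall that for $z \to \infty$ with $z > 0$ one has the two-term (incoming/outgoing) expansion
\begin{equation*}
J_\nu(z) = \frac{1}{\sqrt{2\pi z}} \bigl( e^{i(z - \nu\pi/2 - \pi/4)} W_\nu^{+}(z) + e^{-i(z - \nu\pi/2 - \pi/4)} W_\nu^{-}(z) \bigr) + O_{\nu,J}\bigl(z^{-J-3/2}\bigr),
\end{equation*}
where $W_\nu^{\pm}(z) = \sum_{j=0}^{J} \gamma_j^{\pm}(\nu)\, z^{-j}$ with $\gamma_0^{\pm}(\nu)=1$ and all $\gamma_j^{\pm}(\nu)$ depending only on $\nu$. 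This is the form of the expansion that makes both the phases and the power-series coefficients transparent.

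Since $h \in C_c(0,\infty)$, there exist $0 < a < b < \infty$ with $\mathrm{supp}\, h \subset [a,b]$, so on the support of $h$ we have $y \asymp 1$. Hence, whenever $x \gg 1$, the argument $z = 4\pi\sqrt{xy}$ stays $\gg 1$ uniformly in $y$, and the Bessel asymptotic above can be used inside the integral with uniform control on the remainder. Substituting $z = 4\pi\sqrt{xy}$ gives $z^{-j} = (4\pi)^{-j}(xy)^{-j/2}$ and $(2\pi z)^{-1/2} = (8\pi^2)^{-1/2}(xy)^{-1/4}$, while $e^{\pm i z} = e(\pm 2\sqrt{xy})$. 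Collecting the constants $(4\pi)^{-j}$, the prefactor $(8\pi^2)^{-1/2}$, the factor $2\pi i^{\kappa}$ from \eqref{intgeral transform-1}, and the phase constants $e^{\mp i(\kappa-1)\pi/2 \mp i\pi/4}$ into single constants $c_j = c_j(\kappa)$ and $d_j = d_j(\kappa)$, the main part of the expansion takes the stated form
\begin{equation*}
x^{-1/4}\int_0^\infty h(y)\, y^{-1/4} \sum_{j=0}^{J} \frac{c_j\, e(2\sqrt{xy}) + d_j\, e(-2\sqrt{xy})}{(xy)^{j/2}}\, \mathrm{d}y .
\end{equation*}

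It remains to absorb the error. The Bessel remainder contributes
\begin{equation*}
2\pi i^{\kappa}\int_0^\infty h(y)\, O_{\kappa,J}\bigl((4\pi\sqrt{xy})^{-J-3/2}\bigr) \mathrm{d}y = O_{\kappa,J}\bigl(x^{-J/2-3/4}\bigr),
\end{equation*}
where the last equality uses the compact support of $h$ (so that $y^{-J/2-3/4}$ is integrable against $h$) and the boundedness of $h$. This gives exactly the claimed remainder $O_{\kappa,J}(x^{-J/2-3/4})$.

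There is no serious obstacle here; the argument is essentially a bookkeeping exercise. The only point requiring care is to verify that the Bessel asymptotic is used with a remainder that is uniform in $y$ on $\mathrm{supp}\, h$, which follows automatically because $y$ is bounded away from $0$ and $\infty$. A second, purely notational, point is to keep track of the phase constants $e^{\mp i(\kappa-1)\pi/2 \mp i\pi/4}$ so that the two oscillatory exponentials that naturally appear from $J_{\kappa-1}$ match the form $c_j\, e(2\sqrt{xy}) + d_j\, e(-2\sqrt{xy})$ in the statement; these factors are absorbed into the $\kappa$-dependent constants $c_j, d_j$.
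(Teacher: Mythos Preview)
Your proof is correct and follows the standard route: insert the large-argument asymptotic expansion of $J_{\kappa-1}$ into the defining integral, use the compact support of $h$ to keep the argument $4\pi\sqrt{xy}$ uniformly large, and read off the main terms and remainder. The paper does not actually prove this lemma; it simply cites it as \cite[Lemma~3.2]{LS}, and the argument there is exactly the one you give.
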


\subsection{Maass cusp forms for $\mathrm{GL}_2$}

Let $f$ be a Hecke-Maass cusp form for $\rm SL_2(\mathbb{Z})$
with Laplace eigenvalue $1/4+\mu_f^2$. Then $f$ has a Fourier expansion
$$
f(z)=\sqrt{y}\sum_{n\neq 0}\lambda_f(n)K_{i\mu_f}(2\pi |n|y)e(nx),
$$
where $K_{i\mu}$ is the modified Bessel function of the third kind.
The Fourier coefficients satisfy
\bea\label{individual bound}
\lambda_f(n)\ll n^{\vartheta},
\eea
where, here and throughout the paper, $\theta$ denotes the exponent towards the Ramanujan
conjecture for $\rm GL_2$ Maass forms. The Ramanujan conjecture states that $\vartheta=0$ and
the current record due to Kim and Sarnak \cite{KS} is $\vartheta=7/64$.
On average we have the following
Rankin-Selberg estimate (see Proposition 19.6 in \cite{DFI2})
\bea\label{Rankin-Selberg}
\sum_{n\leq x}|\lambda_f(n)|^2\ll_{\varepsilon}x(x|\mu_f|)^{\varepsilon}.
\eea

For $h(x)\in C_c^{\infty}(0,\infty)$, we define the integral transforms
\bea\label{intgeral transform-2}
\begin{split}
\Phi_h^+(x) =& \frac{-\pi}{\sin(\pi i\mu_f)} \int_0^\infty h(y)\left(J_{2i\mu_f}(4\pi\sqrt{xy})
- J_{-2i\mu_f}(4\pi\sqrt{xy})\right) \mathrm{d}y,\\
\Phi_h^-(x) =& 4\varepsilon_f\cosh(\pi \mu_f)\int_0^\infty h(y)K_{2i\mu_f}(4\pi\sqrt{xy}) \mathrm{d}y,
\end{split}\eea
where $\varepsilon_f$ is an eigenvalue under the reflection operator.
We have the following Voronoi summation formula (see \cite[Theorem A.4]{KMV}).

\begin{lemma}\label{voronoiGL2-Maass}
Let $q\in \mathbb{N}$ and $a\in \mathbb{Z}$ be such
that $(a,q)=1$. For $X>0$, we have
\bna\label{voronoi for Maass form}
\sum_{n=1}^{\infty}\lambda_f(n)e\left(\frac{an}{q}\right)h\left(\frac{n}{N}\right)
= \frac{N}{q} \sum_{\pm}\sum_{n=1}^{\infty}\lambda_f(n)
e\left(\mp\frac{\overline{a}n}{q}\right)\Phi_h^{\pm}\left(\frac{nN}{q^2}\right),
\ena
where $\overline{a}$ denotes
the multiplicative inverse of $a$ modulo $q$.
\end{lemma}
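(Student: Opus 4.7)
My plan is to prove Lemma \ref{voronoiGL2-Maass} by the standard Mellin-transform route, converting the sum into a contour integral, applying the functional equation of the additively twisted $L$-series attached to $f$, and shifting the contour far to the left. Write $\widetilde h(s)=\int_0^\infty h(y)y^{s-1}dy$; since $h\in C_c^\infty(0,\infty)$, $\widetilde h$ is entire with rapid decay on vertical lines. By Mellin inversion,
\[
S \;:=\; \sum_{n=1}^{\infty}\lambda_f(n)\,e\!\left(\frac{an}{q}\right)h\!\left(\frac{n}{N}\right) \;=\; \frac{1}{2\pi i}\int_{(2)}\widetilde h(s)\,N^{s}\,L(s,f,a/q)\,ds,
\]
where $L(s,f,a/q):=\sum_{n\ge 1}\lambda_f(n)e(an/q)n^{-s}$ converges absolutely for $\Re(s)$ large by the Rankin--Selberg estimate \eqref{Rankin-Selberg}. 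It now suffices to produce a functional equation for $L(s,f,a/q)$ and shift.

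To derive the functional equation, I would use the automorphy of $f$ under a suitable element of $\mathrm{SL}_2(\mathbb Z)$ with bottom row a multiple of $(q,-a)$, which sends the cusp $a/q$ to $-\overline{a}/q$. Integrating $f$ against $y^{s-1/2}$ along the horocycle at $a/q$ and unfolding via this matrix, together with the Mellin transform of the $K_{i\mu_f}$ factor in the Fourier expansion of $f$, yields a completed functional equation of the shape
\[
L(s,f,a/q) \;=\; q^{1-2s}\sum_{\pm} G^{\pm}_{\mu_f,\varepsilon_f}(s)\,L(1-s,f,\mp \overline{a}/q),
\]
where $G^{\pm}_{\mu_f,\varepsilon_f}(s)$ is an explicit ratio of products of $\Gamma\bigl((s\pm i\mu_f)/2\bigr)$; the two terms correspond to the even/odd decomposition of $f$ under the reflection $z\mapsto -\overline{z}$ whose eigenvalue is $\varepsilon_f$, with $\varepsilon_f$ appearing multiplicatively in $G^{-}$.

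Substituting this back into the Mellin integral and shifting the contour from $\Re(s)=2$ to $\Re(s)=-1$ (no poles are crossed, as the additively twisted $L$-functions are entire for a cusp form), I may interchange summation and integration by Stirling plus rapid decay of $\widetilde h$, producing
\[
S \;=\; \frac{N}{q}\sum_{\pm}\sum_{m=1}^{\infty}\lambda_f(m)\,e\!\left(\mp\frac{\overline{a}m}{q}\right)\frac{1}{2\pi i}\int_{(-1)}\widetilde h(s)\Bigl(\frac{q^{2}}{mN}\Bigr)^{1-s}G^{\pm}_{\mu_f,\varepsilon_f}(s)\,ds.
\]
The inner Mellin--Barnes integral is the inverse Mellin transform of a product of gamma factors against $\widetilde h$; invoking the classical Mellin identities for $\bigl(J_{2i\mu_f}-J_{-2i\mu_f}\bigr)(4\pi\sqrt{xy})$ and $K_{2i\mu_f}(4\pi\sqrt{xy})$ paired with $h(y)$, this integral is exactly $\Phi_h^{\pm}(mN/q^2)$ as in \eqref{intgeral transform-2}, including the prefactors $-\pi/\sin(\pi i\mu_f)$ and $4\varepsilon_f\cosh(\pi\mu_f)$. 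This gives the claimed identity.

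The main obstacle is not conceptual but bookkeeping: one must track the archimedean gamma-factor ratios with enough care that the inverse Mellin transform on the dual side reproduces \emph{exactly} the kernels $\Phi_h^{\pm}$ with the correct constants, in particular ensuring the reflection eigenvalue $\varepsilon_f$ attaches precisely to the $K$-Bessel term and that the difference $J_{2i\mu_f}-J_{-2i\mu_f}$ emerges with the factor $-\pi/\sin(\pi i\mu_f)$ out front. A secondary technical point is justifying the termwise application of the functional equation and the interchange of sum and integral after the shift, which is handled by a weak bound for $L(1-s,f,\bar a/q)$ on vertical lines combined with Stirling's asymptotic and the Paley--Wiener-type decay of $\widetilde h$. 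Since all of these calculations are carried out in the general $\mathrm{GL}_2$ setting in [KMV, Appendix A], the cleanest presentation is to quote that reference, which is what the author does.
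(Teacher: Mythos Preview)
Your proposal is correct and matches the paper's treatment: the paper does not give a proof of this lemma but simply cites \cite[Theorem~A.4]{KMV}, and the Mellin-transform/functional-equation argument you sketch is exactly the route taken there. Your closing remark already identifies this, so there is nothing to add.
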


For $x\gg 1$, we have (see (3.8) in \cite{LS})
\bea\label{The $-$ case}
\Phi_h^-(x)\ll_{\mu,A}x^{-A}.
\eea
For $\Phi_h^+(x)$ and $x\gg 1$, we have a similar asymptotic formula as for
$\Phi_h(x)$ in the holomorphic case (see \cite{LS}, Lemma 3.4).
\begin{lemma}\label{voronoiGL2-Maass-asymptotic}
For any fixed integer $J\geq 1$ and $x\gg 1$, we have
\bna
\Phi_h^{+}(x)=x^{-1/4} \int_0^\infty h(y)y^{-1/4}
\sum_{j=0}^{J}
\frac{c_{j} e(2 \sqrt{xy})+d_{j} e(-2 \sqrt{xy})}
{(xy)^{j/2}}\mathrm{d}y
+O_{\mu,J}\left(x^{-J/2-3/4}\right),
 \ena
where $c_{j}$ and $d_{j}$ are some constants depending on $\mu$.
\end{lemma}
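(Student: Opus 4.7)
The plan is to derive the stated asymptotic expansion of $\Phi_h^+(x)$ directly from the classical large-argument asymptotic of the $J$-Bessel function. Since $h\in C_c^\infty(0,\infty)$, the variable $y$ in the defining integral \eqref{intgeral transform-2} is confined to a fixed compact subset of $(0,\infty)$, so the hypothesis $x\gg 1$ guarantees that $z:=4\pi\sqrt{xy}$ satisfies $|z|\gg 1$ uniformly in $y$. This is the only regime I need to handle.

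First I would record the standard Hankel-type expansion (e.g.\ Watson \S7.2),
\[
J_\nu(z)=\sqrt{\frac{2}{\pi z}}\Bigl[e^{\,i(z-\nu\pi/2-\pi/4)}\sum_{k=0}^{J}\frac{\alpha_k(\nu)}{z^k}+e^{-i(z-\nu\pi/2-\pi/4)}\sum_{k=0}^{J}\frac{\beta_k(\nu)}{z^k}\Bigr]+O_{\nu,J}\bigl(z^{-J-3/2}\bigr),
\]
valid for $|z|\to\infty$, where $\alpha_k(\nu),\beta_k(\nu)$ are polynomials in $\nu$ assembled from the Hankel symbols. Specialising to $\nu=\pm 2i\mu_f$, the twist $e^{\mp i\nu\pi/2}=e^{\pm\pi\mu_f}$ is a constant in $z$, and subtracting the two expansions of $J_{\pm 2i\mu_f}(z)$ gives
\[
J_{2i\mu_f}(z)-J_{-2i\mu_f}(z)=\sqrt{\frac{2}{\pi z}}\bigl[e^{iz}A_J(\mu_f;1/z)+e^{-iz}B_J(\mu_f;1/z)\bigr]+O_{\mu,J}\bigl(z^{-J-3/2}\bigr),
\]
where $A_J$ and $B_J$ are polynomials of degree $J$ in $1/z$ whose coefficients depend on $\mu_f$ and on $\sinh(\pi\mu_f)$. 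Multiplying by the prefactor $-\pi/\sin(i\pi\mu_f)=i\pi/\sinh(\pi\mu_f)$ from \eqref{intgeral transform-2} only modifies these $\mu_f$-dependent constants; because the lemma permits $c_j,d_j$ to depend on $\mu$, no delicate cancellation needs to be tracked.

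Finally I would substitute $z=4\pi\sqrt{xy}$, which converts $e^{\pm iz}$ into $e(\pm 2\sqrt{xy})$, the prefactor $z^{-1/2}$ into a constant multiple of $(xy)^{-1/4}=x^{-1/4}y^{-1/4}$, and each $z^{-k}$ into a constant times $(xy)^{-k/2}$. Pulling $x^{-1/4}$ outside the integral and collecting the remaining $y$-integrations reproduces exactly the main term displayed in the lemma, with explicit constants $c_j,d_j$ determined by $\alpha_k(\pm 2i\mu_f)$, $\beta_k(\pm 2i\mu_f)$, and $\sinh(\pi\mu_f)$. For the error, the remainder $O_{\mu,J}(z^{-J-3/2})$ becomes $O_{\mu,J}((xy)^{-J/2-3/4})$; since $y$ ranges over a fixed compact subset of $(0,\infty)$ on which $h$ is bounded, integration in $y$ contributes only $O(1)$, producing the stated total error $O_{\mu,J}(x^{-J/2-3/4})$. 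The main obstacle is really just bookkeeping: writing the Hankel expansion cleanly, checking that its remainder is uniform for $y$ in the fixed support of $h$ (which is standard), and verifying that the combination $\bigl(J_{2i\mu_f}-J_{-2i\mu_f}\bigr)/\sin(i\pi\mu_f)$ remains regular so that the apparent exponential growth $e^{\pm\pi\mu_f}$ in the Hankel expansion is absorbed into the overall $\mu$-dependent constant.
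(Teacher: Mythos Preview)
Your approach is correct and is the standard derivation via the large-argument Hankel expansion of $J_\nu$. Note, however, that the paper does not supply its own proof of this lemma; it is simply quoted from \cite{LS}, Lemma~3.4, so there is no in-paper argument to compare against. Your sketch is essentially what one expects the proof in \cite{LS} to be: plug the asymptotic $J_\nu(z)=\sqrt{2/(\pi z)}\bigl(\cdots\bigr)+O_{\nu,J}(z^{-J-3/2})$ into the defining integral, use that $h$ has fixed compact support in $(0,\infty)$ so $z=4\pi\sqrt{xy}$ is uniformly large, and collect constants. The only cosmetic point is that your last sentence about ``apparent exponential growth $e^{\pm\pi\mu_f}$'' is not an issue here since the lemma explicitly allows $c_j,d_j$ and the implied constant to depend on $\mu$; you already note this, so there is no gap.
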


\begin{remark}\label{decay-of-largeX}
For $x\gg X^{\varepsilon}$, we can choose $J$ sufficiently large so that
the contribution from the $O$-terms in Lemmas \ref{voronoiGL2-holomorphic-asymptotic} and
\ref{voronoiGL2-Maass-asymptotic}
is negligible. For the main terms
we only need to analyze the leading term $j=1$, as the analysis of the remaining
lower order terms is the same and their contribution is smaller
compared to that of the leading term.
\end{remark}

To deal with both $t$ and $\mu_f$ aspects, it is more convenient to use the
Voronoi formula of following form (see \cite{MS}, Eqs. (1.12),(1.15)).

\begin{lemma}\label{GL2 Voronoi formula-version 2}
Let $\varphi(x)\in C_c^{\infty}(0,\infty)$.
Let $a, \overline{a}, q\in\mathbb{Z}$ with $q\neq0, (a,q)=1$ and
$a\overline{a}\equiv1\;(\text{{\rm mod }} q)$. Then
\bna
\sum_{n\geq 1}\lambda_f(n)e\left(\frac{an}{q}\right)\varphi(n)
=q\sum_\pm\sum_{n\geq 1}\frac{\lambda_f(n)}{n}e\left(\pm\frac{\overline{a}n}{q}\right)
\Psi_{\varphi}^{\pm}\left(\frac{n}{q^2}\right),
\ena
where for $\sigma>-1$,
\bea\label{integral 1}
\Psi_{\varphi}^{\pm}(x)=\frac{1}{4\pi^2 i}\int_{(\sigma)}(\pi^2 x)^{-s}\gamma_f^{\pm}(s)\widetilde{\varphi}(-s) \mathrm{d}s,
\eea
with
\bea\label{gamma}
\gamma_f^{\pm}(s)=\prod_\pm\frac{\Gamma\left(\frac{1+s\pm i\mu_f}{2}\right)}
{\Gamma\left(\frac{-s\pm i\mu_f}{2}\right)}\pm
\prod_\pm\frac{\Gamma\left(\frac{2+s\pm i\mu_f}{2}\right)}
{\Gamma\left(\frac{1-s\pm i\mu_f}{2}\right)}.
\eea
Here $\widetilde{\varphi}(s)=\int_0^{\infty}\varphi(u)u^{s-1}\mathrm{d}u$ is the Mellin transform of $\varphi$.
\end{lemma}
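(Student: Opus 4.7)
The strategy is to derive this Mellin-Barnes form of the Voronoi identity from the Bessel-kernel version already recorded in Lemma \ref{voronoiGL2-Maass}. Taking $h=\varphi$ with no rescaling, the kernels $\Phi_h^\pm$ from \eqref{intgeral transform-2} are integrals of $\varphi(y)$ against combinations of $J_{\pm 2i\mu_f}$ and $K_{2i\mu_f}$ evaluated at $4\pi\sqrt{xy}$. Each of these classical Bessel functions admits a standard Mellin-Barnes representation whose kernel is a ratio of Gamma functions; substituting these representations, interchanging the order of integration (justified by absolute convergence since $\varphi\in C_c^\infty(0,\infty)$ forces rapid decay of $\widetilde{\varphi}$ in vertical strips), and performing the $y$-integral first produces exactly $\widetilde{\varphi}(-s)$ times an explicit Gamma ratio in $s$, leaving a single contour integral of the shape \eqref{integral 1}.

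The $\pm$ structure then collapses neatly. The $J$-Bessel contribution from $\Phi_\varphi^+$ yields one ratio of Gamma factors in $s$ and $i\mu_f$, while the $K$-Bessel contribution from $\Phi_\varphi^-$ yields another; after applying the reflection identity $\Gamma(z)\Gamma(1-z)=\pi/\sin(\pi z)$ to normalize denominators, one finds that the two ratios combine as a sum and as a difference of the two Gamma products displayed in \eqref{gamma}. This matches the definition of $\gamma_f^\pm(s)$ verbatim, with the outer $\pm$ in the statement corresponding to the two possible even/odd symmetry types of $f$. The $q$-prefactor and the dual additive phase $e(\pm \overline{a}n/q)$ are inherited unchanged from Lemma \ref{voronoiGL2-Maass}, while the factor $1/q^2$ is absorbed into the argument of $\Psi_\varphi^\pm$ exactly as in \eqref{integral 1}.

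The main obstacle here is bookkeeping rather than genuine mathematical content: one must track signs, powers of $\pi$, and the pole-free region for the contour $\mathrm{Re}(s)=\sigma>-1$ after the two Mellin-Barnes integrals are combined. The condition $\sigma>-1$ is chosen precisely so that all the up-Gamma factors $\Gamma((1+s\pm i\mu_f)/2)$ and $\Gamma((2+s\pm i\mu_f)/2)$ have their poles (at $s=-1-2k\mp i\mu_f$ and $s=-2-2k\mp i\mu_f$ respectively) strictly to the left of the contour, allowing Mellin inversion to be carried out with no residual contributions. A cleaner alternative, and the one the paper implicitly adopts, is to quote the Miller-Schmid formulation \cite{MS}, which derives the Mellin-Barnes Voronoi formula directly from the Whittaker-function expansion at the adelic level and produces the exact shape of \eqref{gamma} without any intermediate Bessel bookkeeping.
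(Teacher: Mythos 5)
The paper does not actually prove this lemma; it is quoted verbatim from Miller--Schmid \cite{MS}, Eqs.~(1.12) and (1.15), as the sentence immediately preceding the statement notes. You recognize this at the end of your proposal. Your alternative idea, deriving the identity from the Bessel-kernel version of Lemma~\ref{voronoiGL2-Maass} via Mellin--Barnes representations of the $J$- and $K$-Bessel kernels, is a legitimate route in principle, but two structural claims in your sketch are wrong.

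First, the $\pm$ in $\gamma_f^\pm$ and $\Psi_\varphi^\pm$ does \emph{not} label the even/odd parity of $f$: it is the sign of the dual additive twist $e(\pm\overline{a}n/q)$, and for a single fixed $f$ both terms occur in the $\sum_\pm$ on the right-hand side. Parity enters elsewhere (through $\varepsilon_f$ in $\Phi_h^-$), not as a case distinction indexed by that $\pm$. Second, the two Bessel contributions do not ``combine as a sum and a difference''. After taking $N=1$ in Lemma~\ref{voronoiGL2-Maass} and writing $x=n/q^2$, the two formulas match only if $\Phi_\varphi^{\pm}(x)=x^{-1}\Psi_\varphi^{\mp}(x)$ (note the flip: in Lemma~\ref{voronoiGL2-Maass} the kernel $\Phi_h^{\pm}$ pairs with $e(\mp\overline{a}n/q)$, whereas here $\Psi_\varphi^{\pm}$ pairs with $e(\pm\overline{a}n/q)$). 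Each of $\Phi_\varphi^+$ and $\Phi_\varphi^-$ must therefore be shown \emph{separately} to produce one of the two combinations: writing $P_1(s)$, $P_2(s)$ for the two products in \eqref{gamma}, the $J$-difference inside $\Phi_\varphi^+$ must yield $P_1(s)-P_2(s)$ and the $K$-Bessel inside $\Phi_\varphi^-$ must yield $P_1(s)+P_2(s)$, after applying the duplication formula to split each $\Gamma(s/2\pm i\mu_f)$ into a product of half-argument Gammas and the reflection formula to absorb the $\sin(\pi i\mu_f)$ and $\cosh(\pi\mu_f)$ prefactors. None of this bookkeeping is carried out, and it is precisely where the identity \eqref{gamma} is actually verified; as written, the sketch asserts the conclusion rather than deriving it.
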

By Stirling asymptotic formula (see \cite{O}, Section 8.4, in particular (4.03)),
for $|\arg s|\leq \pi-\varepsilon$ for any $\varepsilon>0$ and $|s|\gg 1$,
\bna
\ln \Gamma(s)=\left(s-\frac{1}{2}\right)\ln s-s+\frac{1}{2}\ln(2\pi)
+\sum_{j=1}^{K_1}\frac{B_{2j}}{2j(2j-1)s^{2j-1}}+O_{K_1,\varepsilon}\left(\frac{1}{|s|^{2K_1+1}}\right),
\ena
where $B_j$ are Bernoulli numbers. Thus for $s=\sigma+it$, $\sigma$ fixed and $|\tau|\geq 2$,
\bea\label{Stirling approximation}
\Gamma(\sigma+i\tau)=\sqrt{2\pi}(i\tau)^{\sigma-1/2}
e^{-\pi|\tau|/2}\left(\frac{|\tau|}{e}\right)^{i\tau}
\left(1+\sum_{j=1}^{K_2}\frac{c_j}{\tau^j}+O_{\sigma,K_2,\varepsilon}
\bigg(\frac{1}{|\tau|^{K_2+1}}\bigg)\right),
\eea
where the constants $c_j$ depend on $j,\sigma$ and $\varepsilon$.
%Therefore,
%\bea\label{Stirling}
%\frac{\Gamma(\sigma+i\tau)}
%{\Gamma(\sigma-i\tau)}
%=(-1)^{\sigma-1/2}\left(\frac{|\tau|}{e}\right)^{2i\tau}
%\left(1+\sum_{j=1}^{K_3}\frac{d_j}{\tau^j}+O_{\sigma,K_3,\varepsilon}
%\bigg(\frac{1}{|\tau|^{K_3+1}}\bigg)\right),
%\eea
%where the constants $d_j$ depend on $j,\sigma$ and $\varepsilon$.
Thus for $\sigma\geq -1/2$,
\bea\label{Gamma-2}
\gamma_f^{\pm}(\sigma+i\tau)&=&\prod_\pm\frac{\Gamma(\frac{1+\sigma+i(\tau\pm \mu_f)}{2})}
{\Gamma(\frac{-\sigma-i(\tau\pm \mu_f)}{2})}\pm
\prod_\pm\frac{\Gamma(\frac{2+\sigma+i(\tau\pm \mu_f)}{2})}
{\Gamma(\frac{1-\sigma-i(\tau\pm \mu_f)}{2})}\nonumber\\
&\ll &(|\tau+\mu_f||\tau-\mu_f|)^{\sigma+1/2}.
\eea

\subsection{Rankin-Selberg $L$-function}
Let $f$ be a Maass cusp form for $\rm SL_2(\mathbb{Z})$ with
Laplace eigenvalue $1/4+\mu_f^2$, $\mu_f>0$, and parity $\delta_f=0$ or 1.
Let $g$ be either a holomorphic cusp form of weight $2\kappa$ or a Maass cusp form  for $\rm SL_2(\mathbb{Z})$
with Laplace eigenvalue $1/4+\mu_g^2$, $\mu_g>0$, and parity $\delta_g=0$ or 1.
For $\mbox{Re}(s)>1$, the Rankin-Selberg $L$-function is defined as
\bna
L\left(s,f\otimes g\right)=\zeta(2s)\sum_{n=1}^{\infty}
\frac{\lambda_f(n)\lambda_g(n)}{n^{s}},
\ena
which can be meromorphically continued to the whole complex plane except for
a simple pole at $s=1$ if $g=\overline{f}$ and satisfies the functional equation (see \cite{IK}, Section 5.11 and \cite{JM}, Lemma 1)
\bea\label{functional equation}
\Lambda(s,f\otimes g)=\Lambda(1-s,f\otimes g),
\eea
where
\bna
\Lambda(s,f\otimes g)=\gamma(s,f\otimes g)L\left(s,f\otimes g\right)
\ena
with
\bna\label{Gamma factor-1}
\gamma(s,f\otimes g)&=&(2\pi)^{-2s} \Gamma\Big(s+\frac{\kappa-1}{2}+i\mu_f\Big)
\Gamma\Big(s+\frac{\kappa-1}{2}-i\mu_f\Big)\quad \mathrm{for}\,g\,\mathrm{holomorphic},\\
\gamma(s,f\otimes g)&=&\pi^{-2s} \Gamma\Big(\frac{s+\delta+i(\mu_f+\mu_g)}{2}\Big)
\Gamma\Big(\frac{s+\delta+i(\mu_f-\mu_g)}{2}\Big)\nonumber\\
&&\times\Gamma\Big(\frac{s+\delta-i(\mu_f+\mu_g)}{2}\Big)
\Gamma\Big(\frac{s+\delta-i(\mu_f-\mu_g)}{2}\Big)\quad \mathrm{for}\,g\,\mathrm{a \,Maass \,form}.
\ena
Here $\delta=0$ or $1$ according to whether $\delta_f=\delta_g$ or not.
%The analytic conductor in the variables $t$ and $\mu_f$ is
%\bea\label{analytic conductor}
%\mathfrak{q}=\prod_{\pm}(|t\pm \mu_f|+1)^2.
%\eea

\subsection{Estimates for exponential integrals}

Let
\begin{equation*}
 I = \int_{\mathbb{R}} w(y) e^{i \varrho(y)} dy.
\end{equation*}
Firstly, we have the following estimates for exponential integrals
(see \cite[Lemma 8.1]{BKY}  and \cite[Lemma A.1]{AHLQ}).
	
	\begin{lemma}\label{lem: upper bound}
		Let $w(x)$ be a smooth function    supported on $[ a, b]$ and
        $\varrho(x)$ be a real smooth function on  $[a, b]$. Suppose that there
		are   parameters $Q, U,   Y, Z,  R > 0$ such that
		\begin{align*}
		\varrho^{(i)} (x) \ll_i Y / Q^{i}, \qquad w^{(j)} (x) \ll_{j } Z / U^{j},
		\end{align*}
		for  $i \geqslant 2$ and $j \geqslant 0$, and
		\begin{align*}
		| \varrho' (x) | \geqslant R.
		\end{align*}
		Then for any $A \geqslant 0$ we have
		\begin{align*}
		I \ll_{ A} (b - a)
Z \bigg( \frac {Y} {R^2Q^2} + \frac 1 {RQ} + \frac 1 {RU} \bigg)^A .
		\end{align*}
			\end{lemma}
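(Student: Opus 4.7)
The plan is to apply the classical non-stationary phase technique of repeated integration by parts. Introducing the differential operator
$$\mathcal{D}[v](y) := \frac{d}{dy}\!\left(\frac{v(y)}{i\varrho'(y)}\right),$$
we may write $e^{i\varrho(y)} = (i\varrho'(y))^{-1}\frac{d}{dy}e^{i\varrho(y)}$. Since $w$ is compactly supported in the interior of $[a,b]$, no boundary terms arise, and integrating by parts $A$ times yields
$$I = (-1)^A\int_a^b \mathcal{D}^A[w](y)\, e^{i\varrho(y)}\, dy,$$
whence $|I|\le (b-a)\sup_y|\mathcal{D}^A[w](y)|$. The lemma therefore reduces to the pointwise bound
$$\mathcal{D}^A[w](y) \ll_A Z\left(\frac{Y}{R^2Q^2}+\frac{1}{RQ}+\frac{1}{RU}\right)^A.$$

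To prove this, I would first control the derivatives of $1/\varrho'$ by Fa\`a di Bruno's formula,
$$\left(\frac{1}{\varrho'}\right)^{\!(k)} = \sum_{\pi}\frac{c_\pi}{(\varrho')^{|\pi|+1}}\prod_{B\in\pi}\varrho^{(|B|+1)},$$
where the sum runs over set partitions $\pi$ of $\{1,\dots,k\}$. Expanding $\mathcal{D}^A[w]$ by the Leibniz rule and inserting the hypotheses $w^{(j)}\ll Z/U^j$ and $\varrho^{(i)}\ll Y/Q^i$ (for $i\ge 2$) together with $|\varrho'|\ge R$ produces a finite sum of terms, each of the form $Z$ times a product of exactly $A$ factors drawn from the three canonical ratios $Y/(R^2Q^2)$, $1/(RQ)$, and $1/(RU)$. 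The pattern is already visible at $A=2$: direct computation gives
$$\mathcal{D}^2[w] = \frac{w''}{(\varrho')^2} - \frac{3w'\varrho''}{(\varrho')^3} - \frac{w\varrho'''}{(\varrho')^3} + \frac{3w(\varrho'')^2}{(\varrho')^4},$$
whose four summands have sizes $Z(1/(RU))^2$, $Z(1/(RU))(Y/(R^2Q^2))$, $Z(1/(RQ))(Y/(R^2Q^2))$, and $Z(Y/(R^2Q^2))^2$, each being a product of exactly two canonical ratios.

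The main technical obstacle is not the integration by parts itself, which is routine, but the combinatorial bookkeeping needed to verify that in every term of $\mathcal{D}^A[w]$ the derivatives of $\varrho$ and $w$ regroup into precisely $A$ canonical factors. Concretely, one must check that any higher derivative $\varrho^{(j)}$ with $j\ge 3$ appearing in the Fa\`a di Bruno expansion can be absorbed in the shape $(Y/(R^2Q^2))\cdot(1/(RQ))^{j-2}$ once the appropriate powers of $\varrho'$ from the denominator are distributed; this follows by a straightforward induction on $A$. Once this combinatorial check is done, the finitely many terms in $\mathcal{D}^A[w]$ are each bounded by $Z$ times the maximum of the three canonical ratios raised to the $A$-th power, and the stated estimate follows.
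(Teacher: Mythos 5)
Your proposal is correct and follows the standard non-stationary-phase argument; the paper itself does not prove this lemma but cites \cite[Lemma 8.1]{BKY} and \cite[Lemma A.1]{AHLQ}, and your iterated integration-by-parts operator $\mathcal{D}$ is precisely the mechanism used there. The combinatorial step you defer to ``induction on $A$'' does go through: writing a generic term of $\mathcal{D}^A[w]$ as $w^{(\alpha)}\prod_{j\geq 2}(\varrho^{(j)})^{m_j}/(\varrho')^p$, one checks that the two quantities $p-\alpha-\sum_j j\,m_j$ and $A-\alpha-\sum_j (j-1)m_j$ are invariant under each application of $\mathcal{D}$ (hence both remain $0$), from which the term is bounded by $Z\,(Y/(R^2Q^2))^{a}(1/(RQ))^{b}(1/(RU))^{c}$ with $a=\sum m_j$, $b=\sum(j-2)m_j$, $c=\alpha$, and $a+b+c=A$, exactly as you assert.
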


Next, we need the following evaluation for exponential integrals
which are
 Lemma 8.1 and Proposition 8.2 of \cite{BKY} in the language of inert functions
 (see \cite[Lemma 3.1]{KPY}).

Let $\mathcal{F}$ be an index set, $Y: \mathcal{F}\rightarrow\mathbb{R}_{\geq 1}$ and under this map
$T\mapsto Y_T$
be a function of $T \in \mathcal{F}$.
A family $\{w_T\}_{T\in \mathcal{F}}$ of smooth
functions supported on a product of dyadic intervals in $\mathbb{R}_{>0}^d$
is called $Y$-inert if for each $j=(j_1,\ldots,j_d) \in \mathbb{Z}_{\geq 0}^d$
we have
\bna
C(j_1,\ldots,j_d)
= \sup_{T \in \mathcal{F} } \sup_{(y_1, \ldots, y_d) \in \mathbb{R}_{>0}^d}
Y_T^{-j_1- \cdots -j_d}\left| y_1^{j_1} \cdots y_d^{j_d}
w_T^{(j_1,\ldots,j_d)}(y_1,\ldots,y_d) \right| < \infty.
\ena

\begin{lemma}
\label{lemma:exponentialintegral}
 Suppose that $w = w_T(y)$ is a family of $Y$-inert functions,
 with compact support on $[Z, 2Z]$, so that
$w^{(j)}(y) \ll (Z/X)^{-j}$.  Also suppose that $\varrho$ is
smooth and satisfies $\varrho^{(j)}(y) \ll Y/Z^j$ for some
$H/X^2 \geq R \geq 1$ and all $y$ in the support of $w$.
\begin{enumerate}
 \item
 If $|\varrho'(y)| \gg Y/Z$ for all $y$ in the support of $w$, then
 $I \ll_A Z R^{-A}$ for $A$ arbitrarily large.
 \item If $\varrho''(y) \gg Y/Z^2$ for all $y$ in the support of $w$,
 and there exists $y_0 \in \mathbb{R}$ such that $\varrho'(y_0) = 0$ (note $y_0$ is
 necessarily unique), then
 \begin{equation}
  I = \frac{e^{i \varrho(y_0)}}{\sqrt{\varrho''(y_0)}}
 F(y_0) + O_{A}(  Z R^{-A}),
 \end{equation}
where $F(y_0)$ is an $Y$-inert function (depending on $A$)  supported
on $y_0 \asymp Z$.
\end{enumerate}
\end{lemma}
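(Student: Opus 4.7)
The plan is to prove parts (1) and (2) by the standard oscillatory-integral toolkit: part (1) follows from repeated integration by parts, and part (2) from a stationary phase argument that localizes to $y_0$ and reduces to a Gaussian-type integral, with the non-stationary tail handled by part (1).

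For part (1), I introduce the differential operator $D f = \frac{d}{dy}\bigl(f/(i\varrho')\bigr)$, so that $D$ is the formal adjoint to multiplication by $e^{i\varrho}$ in the sense that $(i\varrho')^{-1}(e^{i\varrho})' = e^{i\varrho}$. Since $w$ is compactly supported on $[Z,2Z]$, iterated integration by parts gives $I = \int (D^A w)(y)\,e^{i\varrho(y)}\,\mathrm{d}y$ with no boundary contribution. By Leibniz and Faà di Bruno, $D^A w$ is a sum of terms built from the $w^{(k)}$, the higher $\varrho^{(j)}$ for $j \geq 2$, and powers of $1/\varrho'$. The hypotheses $|\varrho'| \gg Y/Z$, $\varrho^{(j)} \ll Y/Z^j$, and $w^{(j)} \ll (Z/X)^{-j}$ imply that each application of $D$ saves a factor comparable to $R^{-1}$ per unit length $Z$; iterating gives $D^A w \ll Z^{-1}R^{-A}$ on $[Z,2Z]$, and integrating over the support of length $Z$ yields $I \ll_A Z R^{-A}$.

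For part (2), I first localize. Choose a smooth cutoff $\eta$ equal to $1$ on $|y-y_0| \le cZR^{-1/2}$ and supported in $|y-y_0| \le 2cZR^{-1/2}$, and split $I = I_{\mathrm{loc}} + I_{\mathrm{tail}}$. Since $\varrho''(y) \gg Y/Z^2$, one has $|\varrho'(y)| \gg Y|y - y_0|/Z^2$ on the tail, which is large enough to apply part (1) (with suitably rescaled parameters) and conclude $I_{\mathrm{tail}} \ll Z R^{-A}$. On the localized part, I invoke the Morse lemma to write $\varrho(y) - \varrho(y_0) = \tfrac{1}{2}\varrho''(y_0)\,u(y)^2$ for a smooth $u$ with $u(y_0)=0$ and $u'(y_0)=1$, then substitute $v = \sqrt{|\varrho''(y_0)|}\,u(y)$. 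This reduces $I_{\mathrm{loc}}$ to
\[
\frac{e^{i\varrho(y_0)}}{\sqrt{|\varrho''(y_0)|}}\int_{\mathbb{R}} V(v)\,e^{i\,\mathrm{sgn}(\varrho''(y_0))\,v^2/2}\,\mathrm{d}v,
\]
where $V$ is a smooth compactly supported function whose derivatives are controlled by those of $w$, $\eta$, and $u$. Evaluating the Gaussian/Fresnel integral by expanding $V$ around $v=0$ and integrating term by term (with the tails of the Taylor expansion absorbed into the $O_A(ZR^{-A})$ error by part (1) applied again) gives the leading-order asymptotic with $F(y_0)$ proportional to $V(0)$.

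The main obstacle is tracking that the resulting $F(y_0)$ is genuinely $Y$-inert, uniformly in the index $T \in \mathcal{F}$. This requires showing that every step above, applied family-wise, introduces only inert smooth factors: the stationary point $y_0 = y_0(T)$ depends smoothly on the parameters through the implicit function theorem applied to $\varrho'(y;T) = 0$; the Morse chart $u$ is smooth in these parameters with the same inertness as $\varrho$; and the Gaussian moments and asymptotic expansion give explicit polynomial combinations of values of $w$ and $\varrho^{(j)}$ at $y_0$, each of which is inert. Differentiating $F(y_0)$ in auxiliary parameters and applying the chain rule through $y_0(T)$ then produces exactly a factor $Y$ per differentiation, as required by the definition of $Y$-inertness. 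Rather than redo this bookkeeping in full, I would conclude by appealing to the calculation already carried out in Blomer--Khan--Young \cite{BKY} (Lemma 8.1 and Proposition 8.2) and recast in the inert-function language in Kiral--Petrow--Young \cite{KPY}, verifying only that the dictionary between their parameters and the $(X,Y,Z,R)$ appearing here is consistent.
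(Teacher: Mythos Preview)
The paper does not prove this lemma at all: it is stated with the preface ``we need the following evaluation for exponential integrals which are Lemma 8.1 and Proposition 8.2 of \cite{BKY} in the language of inert functions (see \cite[Lemma 3.1]{KPY})'' and then used as a black box. Your proposal therefore goes strictly beyond what the paper does, since you sketch the standard integration-by-parts argument for (1) and the Morse-lemma/stationary-phase reduction for (2) before, in the end, appealing to the same two references \cite{BKY} and \cite{KPY} for the uniform inertness bookkeeping. In that sense your approach and the paper's ``proof'' are the same (a citation), and your additional sketch is a correct outline of how those references proceed.
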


We also need the
second derivative test (see \cite[Lemma 5.1.3]{Hux2}).
	
\begin{lemma}\label{lem: 2st derivative test, dim 1}
Let $\varrho(x)$ be real and twice
differentiable on the open interval $[a, b]$
with $ \varrho'' (x) \gg \lambda_0>0$  on $[a, b]$. Let $w(x)$
be real on $[ a, b]$ and let $V_0$ be its total
variation on $[ a, b]$ plus the maximum modulus of $w(x)$ on $[ a, b]$.
Then
		\begin{align*}
	I\ll \frac {V_0} {\sqrt{\lambda_0}}.
		\end{align*}
	\end{lemma}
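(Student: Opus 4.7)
The plan is to reduce the estimate to a uniform bound on an antiderivative of $e^{i\varrho(y)}$ and then exploit the monotonicity of $\varrho'$ furnished by $\varrho''\gg\lambda_0>0$. Define
\begin{align*}
G(x) := \int_a^x e^{i\varrho(t)}\,dt, \qquad x\in[a,b].
\end{align*}
Since $w$ is real and of bounded variation on $[a,b]$ (implicit in the finiteness of $V_0$), Riemann--Stieltjes integration by parts yields
\begin{align*}
I = w(b)G(b) - \int_a^b G(x)\,dw(x),
\end{align*}
so that
\begin{align*}
|I| \leq \Bigl(\sup_{x\in[a,b]}|G(x)|\Bigr)\Bigl(|w(b)| + \int_a^b |dw(x)|\Bigr) \leq V_0\,\sup_{x\in[a,b]}|G(x)|.
\end{align*}
Hence the lemma reduces to establishing the uniform bound
\begin{align*}
\sup_{x\in[a,b]}|G(x)| \ll \lambda_0^{-1/2}.
\end{align*}

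For this key bound, note that $\varrho'$ is strictly increasing by $\varrho''\gg\lambda_0>0$, so it has at most one zero on $[a,b]$; let $c$ be that zero if it exists, and otherwise let $c$ be whichever endpoint makes $|\varrho'(c)|$ smallest. Fix a parameter $\eta>0$ to be chosen and decompose the integration range as $[a,x]=\bigl([a,x]\cap[c-\eta,c+\eta]\bigr)\cup\bigl([a,x]\setminus[c-\eta,c+\eta]\bigr)$. The first piece contributes $O(\eta)$ trivially. On the complement, the mean-value inequality $|\varrho'(t)-\varrho'(c)|\geq\lambda_0|t-c|$ gives $|\varrho'(t)|\geq\lambda_0\eta$; writing $e^{i\varrho(t)} = (i\varrho'(t))^{-1}\tfrac{d}{dt}e^{i\varrho(t)}$ and integrating by parts on each of the two subintervals where $\varrho'$ has constant sign, one obtains boundary terms of size $O((\lambda_0\eta)^{-1})$ and a remainder
\begin{align*}
\int \frac{\varrho''(t)}{i\varrho'(t)^2}\,e^{i\varrho(t)}\,dt,
\end{align*}
whose modulus is bounded by $\int\varrho''/\varrho'^{2}\,dt = [-1/\varrho'(t)]$, itself $O((\lambda_0\eta)^{-1})$ on each subinterval. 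Combining gives $|G(x)|\ll \eta + (\lambda_0\eta)^{-1}$, and the choice $\eta = \lambda_0^{-1/2}$ produces the desired estimate.

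The main obstacle, such as it is, is the careful handling of the possible zero of $\varrho'$: the naive integration by parts breaks down at the singularity of $1/\varrho'$, which forces one to excise a neighborhood of scale $\eta$ and optimize. A small subtlety is that the integration-by-parts step requires $\varrho'$ to have constant sign on each subinterval so that $1/\varrho'$ is monotone; this is why the complement of $[c-\eta,c+\eta]$ must be split into its $t<c-\eta$ and $t>c+\eta$ pieces before the argument is applied. Once these points are addressed, the remaining estimates are routine and the implicit constants are absolute, proving the lemma.
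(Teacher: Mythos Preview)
Your argument is correct and is precisely the classical van der Corput proof of the second derivative test. The paper itself does not prove this lemma; it simply quotes it from Huxley's monograph \cite[Lemma~5.1.3]{Hux2}, where the same reduction (Riemann--Stieltjes integration by parts against $w$, followed by excising a neighborhood of the stationary point of width $\lambda_0^{-1/2}$ and applying the first derivative test on the complement) is carried out. So there is no alternative approach to compare against: what you wrote is the standard proof behind the cited reference.
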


\section{Proof of the main theorem}\label{details-of-proof}

\subsection{Reduction}
Without loss of generality, we assume $t>0$.
In view of \eqref{functional equation}, by the approximate functional equation (see \cite{IK}, Theorem 5.3, Proposition 5.4), we have
\bna
L\left(\frac{1}{2}+it,f\otimes g\right)\ll T_1^{\varepsilon}
\sup_{ N\ll T_1^{1+\varepsilon}|T_2|}\frac{1}{\sqrt{N}}
\bigg|\sum_{n\geq 1}\lambda_f(n)
\lambda_g(n)n^{-it}V\left(\frac{n}{N}\right)\bigg|+ 1,
\ena
where $V\in C_c^{\infty}(1,2)$ satisfying $V^{(j)}(x)\ll_j 1$ for any integer $j\geq 0$, and
\bea\label{T1-T2}
T_1=t+\mu_f, \qquad T_2=t-\mu_f.
\eea
By Cauchy-Schwarz inequality and \eqref{Rankin-Selberg}, we have
\bna
&&\sup_{ N\ll T_1^{9/5}}\frac{1}{\sqrt{N}}
\bigg|\sum_{n\geq 1}\lambda_f(n)
\lambda_g(n)n^{-it}V\left(\frac{n}{N}\right)\bigg|\\
&\ll&\sup_{N\ll T_1^{9/5}}\frac{1}{\sqrt{N}}\Big(\sum_{n\leq N}|\lambda_f(n)|^2
\Big)^{1/2}\Big(\sum_{n\leq N}|\lambda_g(n)|^2
\Big)^{1/2}\\
&\ll&\sup_{1\ll N\ll T_1^{9/5}}N^{1/2}\ll T_1^{9/10}.
\ena
It follows that
\bea\label{step 1}
L\left(\frac{1}{2}+it,f\otimes g\right)\ll  T_1^{\varepsilon}
\sup_{ T_1^{9/5}\ll N\ll T_1^{1+\varepsilon}|T_2|}\frac{1}{\sqrt{N}}
|\mathcal{S}(N)|+ T_1^{9/10},
\eea
where
\bea\label{main sum}
\mathcal{S}(N)=\sum_{n\geq 1}\lambda_f(n)
\lambda_g(n)n^{-it}V\left(\frac{n}{N}\right).
\eea
Note that the first term above is vanishing unless
\bea\label{T2 condition}
|T_2|\gg T_1^{4/5-\varepsilon},
\eea
which we shall henceforth assume.
Note that the trivial upper bound of $\mathcal{S}(N)$ is $O(N)$
by Cauchy-Schwarz inequality and \eqref{Rankin-Selberg}.
In the rest of the paper,
we are devoted to proving a nontrivial estimate for $\mathcal{S}(N)$.

\subsection{Duke-Friedlander-Iwaniec $\delta$-method}

Let \bna
\delta(n)=
\left\{
  \begin{array}{ll}
    1 \quad & \mathrm{if} \; n=0,\\
    0 \quad & \mathrm{otherwise}.
  \end{array}
\right.
\ena
The $\delta$-method of
Duke, Friedlander and Iwaniec (see \cite[Chapter 20]{IK})
states that for any $n\in \mathbb{Z}$ and $Q\in \mathbb{R}^+$,
\bea\label{DFI's}
\delta(n)=\frac{1}{Q}\sum_{1\leq q\leq Q} \;\frac{1}{q}\;
\sideset{}{^\star}\sum_{a\bmod{q}}e\left(\frac{na}{q}\right)
\int_\mathbb{R}g(q,\zeta) e\left(\frac{n\zeta}{qQ}\right)\mathrm{d}\zeta,
\eea
where the $\star$ on the sum indicates
that the sum over $a$ is restricted to $(a,q)=1$.
The function $g$ has the following properties (see (20.158) and (20.159)
of \cite{IK} and  Lemma 15 of \cite{HB})
\bea\label{g-h}
g(q,\zeta)\ll |\zeta|^{-A}, \;\;\;\;\;\; g(q,\zeta) =1+
O\left(\frac{Q}{q}\left(\frac{q}{Q}+|\zeta|\right)^A\right)
\eea
for any $A>1$ and
\bea\label{g rapid decay}
\frac{\partial^j}{\partial \zeta^j}g(q,\zeta)\ll
|\zeta|^{-j}\min\bigg(|\zeta|^{-1},\frac{Q}{q}\bigg)\log Q, \quad j\geq 1.
\eea

We write \eqref{main sum} as
\bna
\mathcal{S}(N)=\sum_{n\geq 1}\lambda_g(n)
U\left(\frac{n}{N}\right)\sum_{m\geq 1}
\lambda_f(m)m^{-it}
V\left(\frac{m}{N}\right)\delta(m-n),
\ena
where $U(x)\in \mathcal{C}_c^{\infty}(1/2,5/2)$ satisfying $U(x)=1$
for $x\in [1,2]$ and $U^{(j)}(x)\ll_j 1$ for any integer $j\geq 0$.
Plugging the identity \eqref{DFI's} for
$\delta(m-n)$ in and
exchanging the order of integration and summations,
we get
\bna
\mathcal{S}(N)&=&\frac{1}{Q}\sum_{1\leq q\leq Q}\frac{1}{q}
\int_{\mathbb{R}} g(q,\zeta)\;
\sideset{}{^\star}\sum_{a\bmod{q}}\;
\sum_{n\geq 1}\lambda_{g}(n)
e\left(-\frac{na}{q}\right)U\left(\frac{n}{N}\right)
e\left(-\frac{n\zeta}{qQ}\right)\nonumber\\
&& \sum_{m\geq 1}\lambda_{f}(m)e\left(\frac{ma}{q}\right)
m^{-it}V\left(\frac{m}{N}\right)
e\left(\frac{m\zeta}{qQ}\right)\mathrm{d}\zeta.
\ena
Note that the contribution from $|\zeta|\leq N^{-G}$ is negligible for $G>0$ sufficiently large.
Moreover, by the first property in \eqref{g-h}, we can restrict $\zeta$ in the range
$|\zeta|\leq N^{\varepsilon}$ up to an negligible error. So we can
insert a smooth partition of unity for the $\zeta$-integral and write $\mathcal{S}(N)$ as
\bna
&&\sum_{N^{-G}\ll \Xi\ll N^{\varepsilon}\atop \text{dyadic}}\frac{1}{Q}
 \sum_{1\leq q\leq Q}\frac{1}{q}
\int_{\mathbb{R}}g(q,\zeta)W\left(\frac{|\zeta|}{\Xi}\right)\;
\sideset{}{^\star}\sum_{a\bmod{q}}\;
\sum_{n\geq 1}\lambda_g(n)
e\left(-\frac{na}{q}\right)U\left(\frac{n}{N}\right)
e\left(-\frac{n\zeta}{qQ}\right)\nonumber\\
&& \times\sum_{m\geq 1}\lambda_f(m)e\left(\frac{ma}{q}\right)m^{-it}
V\left(\frac{m}{N}\right)
e\left(\frac{m\zeta}{qQ}\right)\mathrm{d}\zeta
+O_A(N^{-A}),
\ena
where $W(x)\in \mathcal{C}_c^{\infty}(1,2)$
 satisfying $W^{(j)}(x)\ll_j 1$ for any integer $j\geq 0$.
 Without loss of generality,
we only consider the contribution from $\zeta>0$ (the proof for $\zeta<0$ is entirely
similar). By abuse of notation, we still write the contribution from $\zeta>0$
as $\mathcal{S}(N)$.

Next we break the $q$-sum $\sum_{1\leq q\leq Q}$ into dyadic segments $q\sim C$ with $1\ll C\ll Q$ and write
\bea\label{C range}
\mathcal{S}(N)=\sum_{N^{-G}\ll \Xi\ll X^{\varepsilon}\atop \text{dyadic}}
\sum_{1\ll C\ll Q\atop \text{dyadic}}\mathscr{S}(C,\Xi)+O_A(N^{-A}),
\eea
where $\mathscr{S}(C,\Xi)=\mathscr{S}(N,C,\Xi)$ is
\bea\label{beforeVoronoi}
\mathscr{S}(C,\Xi)&=&\frac{1}{Q}\sum_{q\sim C}\frac{1}{q}
\int_{\mathbb{R}}g(q,\zeta)W\left(\frac{\zeta}{\Xi}\right)\;
\sideset{}{^\star}\sum_{a\bmod{q}}
\sum_{n\geq 1}\lambda_g(n)
e\left(-\frac{na}{q}\right)U\left(\frac{n}{N}\right)
e\left(-\frac{n\zeta}{qQ}\right)\nonumber\\
&& \sum_{m\geq 1}\lambda_f(m)e\left(\frac{ma}{q}\right)m^{-it}
V\left(\frac{m}{N}\right)
e\left(\frac{m\zeta}{qQ}\right)\mathrm{d}\zeta.
\eea

\subsection{Applying Voronoi summation formulas}
In this subsection, we shall apply Voronoi summation formulas
to the $n$-and $m$-sums in \eqref{beforeVoronoi}.
We first consider the sum over $n$.
Depending on whether $g$ is holomorphic or Maass, we apply
Lemma \ref{voronoiGL2-holomorphic}
or Lemma \ref{voronoiGL2-Maass} respectively with $h(x)=U(x)e\left(-\zeta Nx/(qQ)\right)$,
to transform the $n$-sum in \eqref{beforeVoronoi} into
\bea\label{$n$-sum after GL2 Voronoi}
\frac{N}{q}\sum_{\pm}\sum_{n\geq 1}\lambda_g(n)e\left(\pm \frac{n\overline{a}}{q}\right)
\Phi_{h}^{\pm}\left(\frac{nN}{q^2}\right),
\eea
where if $g$ is holomorphic, $\Phi_{h}^+(x)=\Phi_{h}(x)$ with $\Phi_{h}(x)$
given by \eqref{intgeral transform-1} and $\Phi_{h}^-(x)=0$,
while for $g$ a Hecke--Maass cusp form,
$\Phi_{h}^{\pm}(x)$ are given by \eqref{intgeral transform-2}.

Assume
\bea\label{assumption 1}
Q<N^{1/2-\varepsilon}.
\eea
Then we have $nN/q^2\gg N^{\varepsilon}$. In particular,
by \eqref{The $-$ case}, the contribution from
$\Phi_{h}^{-}\left(nN/q^2\right)$ is $O_A(N^{-A})$.
For $\Phi_{h}^{+}\left(nN/q^2\right)$ we apply Lemma \ref{voronoiGL2-holomorphic-asymptotic},
Lemma \ref{voronoiGL2-Maass-asymptotic}
and Remark \ref{decay-of-largeX} and find that evaluating of
the sum \eqref{$n$-sum after GL2 Voronoi} is reduced to dealing with the sum
 \bea\label{integral 2}
 \frac{N^{3/4}}{q^{1/2}}\sum_{\pm}\sum_{n\geq 1}\frac{\lambda_g(n)}{n^{1/4}}
e\left(\frac{n\overline{a}}{q}\right)\Phi^{\pm}\left(n,q,\zeta\right),
 \eea
 where
 \bea\label{Psi definition}
\Phi^{\pm}\left(n,q,\zeta\right)=\int_0^\infty U(x)x^{-1/4}
e\left(-\frac{\zeta Nx}{qQ}\pm \frac{2\sqrt{nNx}}{q}\right)\mathrm{d}x.
\eea
 Note that by \eqref{assumption 1}, the first derivative of the phase function
 in $\Phi^{-}$ is
 \bna
-\frac{\zeta N}{qQ}-\frac{\sqrt{nN/x}}{q}\gg N^{\varepsilon}.
 \ena
 By applying integration by parts
 repeatedly, one finds that the contribution from
$\Phi^{-}$ is negligible. Moreover, for $\zeta\asymp \Xi, N\Xi/(CQ)\ll N^{\varepsilon}$,
the first derivative of the phase function
 in $\Phi^{+}$ is
$$
-\frac{\zeta N}{qQ}+\frac{\sqrt{nN/x}}{q}\gg N^{\varepsilon}
$$
which implies the contributions from these $\zeta$ for $\Phi^{+}$ are
also negligible. So in the following, we only need to consider
$\Phi^{+}\left(n,q,\zeta\right)$ with $\zeta$ in the range
\bea\label{zeta range}
\zeta\asymp \Xi, \quad N\Xi/(CQ)\gg N^{\varepsilon}.
\eea
In this case, we
apply a stationary phase analysis to $\Phi^{+}$.
The stationary point $x_0$ is given by $x_0=nQ^2/(N\zeta^2)$.
Applying Lemma \ref{lemma:exponentialintegral} (2) with $X=Z=1$ and
$Y=R=\sqrt{nN}/q\gg N^{\varepsilon}$, we obtain
\bna
\Phi^+\left(n,q,\zeta\right)
=\frac{q^{1/2}}{(nN)^{1/4}}
e\left(\frac{nQ}{q\zeta}\right)
U^{\natural}\left(\frac{nQ^2}{N\zeta^2}\right)+O_A\left(N^{-A}\right),
\ena
where $U^{\natural}$ is an $1$-inert function (depending on $A$)
supported on $x_0 \asymp 1$.
In particular, this implies, up to a negligible error, we only need to consider those $n$
in the range $n\asymp  N\Xi^2/Q^2$.
Plugging the above asymptotic formula for $\Phi^+\left(n,q,\zeta\right)$ and \eqref{integral 2}
into \eqref{beforeVoronoi}
and switching the order of integrations and summations, we are led to the sum
 \bea\label{after first Voronoi}
\mathscr{S}^*(C,\Xi):=&&\frac{N^{1/2}}{Q}\sum_{q\sim C}\frac{1}{q}\;
\sideset{}{^\star}\sum_{a\bmod{q}}\,
\sum_{n\asymp  \frac{N\Xi^2}{Q^2}}\frac{\lambda_g(n)}{n^{1/2}}
e\left(\frac{n\overline{a}}{q}\right)
\nonumber\\
&& \times\sum_{m\geq 1}\lambda_f(m)e\left(\frac{ma}{q}\right)m^{-it}
V\left(\frac{m}{N}\right)\mathcal{K}(m,n,q,\Xi),
\eea
where $\mathcal{K}(m,n,q,\Xi)$ is given by
\bna
\mathcal{K}(m,n,q,\Xi)=\int_{\mathbb{R}}g(q,\zeta)W\left(\frac{\zeta}{\Xi}\right)
U^{\natural}\left(\frac{nQ^2}{N\zeta^2}\right)
e\left(\frac{nQ}{q\zeta}+\frac{m\zeta}{qQ}\right)\mathrm{d}\zeta.
\ena

Next, we derive an asymptotic expansion for $G(m,n,q,\Xi)$.
By making a change of variable $nQ^2/(N\zeta^2)\rightarrow \zeta$,
\bna
\mathcal{K}(m,n,q,\Xi)
=\frac{n^{1/2}Q}{N^{1/2}}
\int_0^{\infty}\phi(\zeta)
\exp\big(i\varpi(\zeta)\big)\mathrm{d}\zeta,
\ena
where
\bna
\phi(\zeta):=-\frac{1}{2}\zeta^{-3/2}U^{\natural}(\zeta)
g\bigg(q,\frac{n^{1/2}Q}{\zeta^{1/2}N^{1/2}}\bigg)
W\bigg(\frac{n^{1/2}Q}{\zeta^{1/2}N^{1/2}\Xi}\bigg)
\ena
and the phase function $\varpi(\zeta)$ is given by
\bna
\varpi(\zeta)=\frac{2\pi n^{1/2}N^{1/2}}{q}
\left(\frac{m}{N}\zeta^{-1/2}+\zeta^{1/2}\right).
\ena
Note that
\bna
\varpi'(\zeta)=
\frac{\pi n^{1/2}N^{1/2}}{q}
\left(-\frac{m}{N}\zeta^{-3/2}+\zeta^{-1/2}\right),
\ena
and for $j\geq 2$,
\bna
\varpi^{(j)}(\zeta)
=\left(-\frac{3}{2}\right)\cdot\cdot\cdot\left(\frac{1}{2}-j\right)
\frac{\pi n^{1/2}N^{1/2}}{q}\left(
-\frac{m}{N}\zeta^{-1/2-j}+\frac{1}{2j-1}\zeta^{1/2-j}\right).
\ena
Thus the stationary point is $\zeta_0=mN^{-1}$ and
$\varpi^{(j)}(\zeta)\ll_j n^{1/2}N^{1/2}/q$ for $j\geq 2$.
By \eqref{g rapid decay}, we have $\phi^{(j)}(\zeta)\ll_j N^{\varepsilon}$
(Here we note that for $C\leq Q^{1-\varepsilon}$ and $\Xi\ll N^{-\varepsilon}$,
we can replace $g\bigg(q,\frac{n^{1/2}Q}{\zeta^{1/2}N^{1/2}}\bigg)$ by 1 at the cost of a negligible error).
Applying Lemma \ref{lemma:exponentialintegral} (2) with $X=Z=1$ and
$Y=R=n^{1/2}N^{1/2}/q\gg N^{\varepsilon}$, we obtain
\bea\label{K-integral}
\mathcal{K}(m,n,q,\Xi)
=\frac{n^{1/4}q^{1/2}Q}{N^{3/4}}e\left(\frac{2\sqrt{mn}}{q}\right)
F\left(\frac{m}{N}\right)+O_A\left(N^{-A}\right),
\eea
where $F(x)=F(x;\Xi)$ is an inert function (depending on $A$ and $\Xi$)
supported on $x\asymp 1$.

Substituting \eqref{K-integral} into \eqref{after first Voronoi}, we obtain
\bea\label{after first Voronoi-1}
\mathscr{S}^*(C,\Xi)&=&N^{-1/4-it}\sum_{q\sim C}\frac{1}{q^{1/2}}\;
\sideset{}{^\star}\sum_{a\bmod{q}}\;
\sum_{n\asymp  N\Xi^2/Q^2}\frac{\lambda_g(n)}{n^{1/4}}
e\left(\frac{n\overline{a}}{q}\right)
\nonumber\\
&&\times\sum_{m\geq 1}\lambda_f(m)e\left(\frac{ma}{q}\right)\left(\frac{m}{N}\right)^{-it}
\widetilde{V}\left(\frac{m}{N}\right)e\left(\frac{2\sqrt{mn}}{q}\right)
+O_A\left(N^{-A}\right),
\eea
where $\widetilde{V}(x)=V(x)F(x)\in C_c^{\infty}(1,2)$ satisfying
$\widetilde{V}^{(j)}(x)\ll_j 1$ for any integer $j\geq 0$.

Now we apply Lemma \ref{GL2 Voronoi formula-version 2} with
$\varphi(x)=(x/N)^{-it}\widetilde{V}(x/N)e\left(2\sqrt{nx}/q\right)$ to the
$m$-sum in \eqref{after first Voronoi-1} and get
\bea\label{2nd Voronoi}
m\text{-sum}=q\sum_\pm\sum_{m\geq 1}\frac{\lambda_f(m)}{m}e\left(\pm\frac{\overline{a}m}{q}\right)
\Psi_{\varphi}^{\pm}\left(\frac{m}{q^2},n,q\right),
\eea
where by \eqref{integral 1}, for $\sigma>-1$,
\bea\label{integral after Voronois}
\Psi_{\varphi}^{\pm}(x,n,q)=\frac{N^{it}}{4\pi^2 i}\int_{(\sigma)}(\pi^2 x)^{-s}
\gamma_f^{\pm}(s)\bigg(
\int_0^{\infty}
\widetilde{V}\left(\frac{y}{N}\right)e\left(\frac{2n^{1/2}y^{1/2}}{q}\right)y^{-s-it-1}
\mathrm{d}y\bigg) \mathrm{d}s.
\eea

Further substituting \eqref{2nd Voronoi} into \eqref{after first Voronoi-1}
and writing the Ramanujan sum
$$S\left(m\pm n,0;q\right)=\sum_{d|(m\pm n,q)}d\mu(q/d),$$ then we have
\bea\label{after first Voronoi-2}
\mathscr{S}^*(C,\Xi)&=&N^{-1/4-it}\sum_\pm\sum_{q\sim C}q^{1/2}\sum_{d|q}d\mu\left(\frac{q}{d}\right)
\sum_{m\geq 1}\frac{\lambda_f(m)}{m}
\nonumber\\
&&\times\sum_{n\asymp  N\Xi^2/Q^2\atop n\equiv \pm m\bmod d}\frac{\lambda_g(n)}{n^{1/4}}
\Psi_{\varphi}^{\pm}\left(\frac{m}{q^2},n,q\right)
+O_A\left(N^{-A}\right).
\eea

We will show in Section \ref{proofs-of-technical-lemma} that
the integral $\Psi_{\varphi}^{\pm}(x,n,q)$ has the following properties.

\begin{lemma}\label{integral-lemma-1}
Let $B=2 n^{1/2}N^{1/2}/q.$

(1) If $T_2^{1-\varepsilon}\ll B\ll T_1^{1+\varepsilon}$, then
$\Psi_{\varphi}^{\pm}(x,n,q)=\mathbf{\Psi}_1+\mathbf{\Psi}_2$, where
$\mathbf{\Psi}_1$
is negligibly small unless $Nx\ll T_1^{1+\varepsilon}$, in which case
$$
\mathbf{\Psi}_1\ll (BNx)^{1/2},
$$
and $\mathbf{\Psi}_2$ is negligibly small unless $Nx\ll BT_1^{1+\varepsilon}$, in which case
$$
\mathbf{\Psi}_2\ll (Nx)^{1/2}.
$$

(2) If $B\ll T_2^{1-\varepsilon}$, then $\Psi_{\varphi}^{\pm}(x,n,q)$ is negligibly small
unless $x\asymp T_1|T_2|/N$, in which case
\bna
\Psi_{\varphi}^{\pm}(x,n,q)&=&(Nx)^{1/2+it}V_{\natural}^{\pm}(\tau_*)e\bigg(-\frac{T_1}{2\pi}\log\frac{T_1}{2e}
-\frac{T_2}{2\pi}\log\frac{|T_2|}{2e}+\frac{B\tau_0}{\pi}\\&&\qquad\qquad\qquad\qquad\quad
+\frac{B}{2\pi}\sum_{j=1}^{K}g_j\bigg(\frac{B}{T_1},\frac{B}{T_2}\bigg)\tau_0^{j+1}\bigg)
+ O_{A}(  N^{-A}),
\ena
where $K\geq 1$ is an integer, $A>0$ is a large constant depends on $K$,
$\tau_0=\left(T_1|T_2|/(4Nx)\right)^{1/2}$,
$V_{\natural}^{\pm}(\tau)$ is some inert function
supported on $\tau\asymp 1$, $\tau_*$ is defined in \eqref{stationary point-1}, and
$g_j(y_1,y_2)$ are some homogeneous polynomials of degree $j$ and satisfy
$g_j(y_1,y_2)\ll_j y_2^{j}$ for any integer $j\geq 1$.

(3) If $B\gg T_1^{1+\varepsilon}$, then $\Psi_{\varphi}^{\pm}(x,n,q)$
is negligibly small unless $Nx\asymp T_1|T_2|$, in which case
$
\Psi_{\varphi}^{\pm}(x,n,q)\ll (Nx)^{1/2}.$
\end{lemma}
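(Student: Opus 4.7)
The plan is to evaluate \eqref{integral after Voronois} by two successive applications of stationary phase: first to the inner $y$-integral, then to the outer Mellin-Barnes $s$-integral after Stirling's formula is substituted for $\gamma_f^{\pm}(s)$. After the change of variables $y = Nu$ with $u \asymp 1$, the inner integral becomes, up to a factor $N^{-s-it}$, an oscillatory integral with phase $\varrho(u) = 2\pi B u^{1/2} - (\tau + t)\log u$, where $s = \sigma + i\tau$. The stationary point is $u_0 = ((\tau+t)/(\pi B))^2$, so by Lemma \ref{lem: upper bound} the integral is negligible unless $\tau + t \asymp B$; in that window Lemma \ref{lemma:exponentialintegral}(2) yields an asymptotic of the form $N^{-s-it} e^{i\Phi_1(\tau)}$ times an inert amplitude of magnitude $(\tau+t)^{3/2}/B^2$, where $\Phi_1(\tau) = 2(\tau+t)\bigl[1 - \log((\tau+t)/(\pi B))\bigr]$.

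Next, applying Stirling \eqref{Stirling approximation} to the four Gamma factors in each summand of $\gamma_f^{\pm}$ \eqref{gamma} produces an amplitude $\asymp (|\tau+\mu_f||\tau-\mu_f|)^{\sigma+1/2}$ together with the oscillating phase $\Phi_2(\tau) = (\tau+\mu_f)\log(|\tau+\mu_f|/2) + (\tau-\mu_f)\log(|\tau-\mu_f|/2) - 2\tau$. Combined with the factor $(\pi^2 xN)^{-s}$, the total outer phase becomes $\Phi(\tau) = \Phi_1(\tau) + \Phi_2(\tau) - \tau\log(\pi^2 xN)$, and direct computation gives $\Phi'(\tau) = \log\frac{(\tau^2 - \mu_f^2) B^2}{4(\tau+t)^2 xN}$. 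Hence the outer stationary point $\tau_*$ solves $(\tau_*^2 - \mu_f^2) B^2 = 4(\tau_* + t)^2 xN$, and the three cases of the lemma correspond to different asymptotic regimes for $\tau_*$.

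In case (2), the inner constraint $\tau + t \asymp B \ll |T_2|$ forces $\tau \approx -t$, whence $\tau^2 - \mu_f^2 \approx T_1|T_2|$; the outer stationary equation then pins $xN \asymp T_1|T_2|$, in which case $\tau_0 = (T_1|T_2|/(4Nx))^{1/2} \asymp 1$. Applying Lemma \ref{lemma:exponentialintegral}(2) to the $\tau$-integral and evaluating $\Phi$ at $\tau_*$ yields the leading phase $-\frac{T_1}{2\pi}\log\frac{T_1}{2e} - \frac{T_2}{2\pi}\log\frac{|T_2|}{2e} + \frac{B\tau_0}{\pi}$, and the polynomial corrections $g_j(B/T_1, B/T_2)\tau_0^{j+1}$ arise from iteratively Taylor-expanding the stationary-point equation in the small ratios $B/T_1, B/T_2$. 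Case (3), $B \gg T_1^{1+\varepsilon}$, runs similarly but with $\tau \asymp B \gg T_1$, again pinning $xN \asymp T_1|T_2|$, and the bound $(Nx)^{1/2}$ comes directly from the stationary-phase amplitude. Case (1) is intermediate: $\Psi_\varphi^{\pm}$ splits as $\mathbf{\Psi}_1 + \mathbf{\Psi}_2$ according to which of the two summands of $\gamma_f^\pm$ and which branch of $\log$ contributes; the support restrictions and size estimates follow from pointwise stationary-phase amplitude bounds together with Lemmas \ref{lem: upper bound} and \ref{lem: 2st derivative test, dim 1}.

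The main technical obstacle lies in case (2), namely extracting the explicit polynomial expansion in $g_j(B/T_1,B/T_2)\tau_0^{j+1}$: this requires solving the transcendental equation for $\tau_*$ iteratively as a convergent series in the small parameters $B/T_1, B/T_2$, substituting back into $\Phi(\tau_*)$ while consistently retaining higher-order Stirling and inner stationary-phase corrections, and bounding the tail remainder by $O(N^{-A})$ via a suitable choice of $K$. Additional bookkeeping is required to handle the two summands of $\gamma_f^\pm$ and the sign ambiguities near $\tau = \pm\mu_f$, and to verify that the amplitude function $V_\natural^\pm(\tau_*)$ is indeed inert as a function of $\tau_*$.
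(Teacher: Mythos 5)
Your overall architecture matches the paper's: evaluate the inner $y$-integral by stationary phase (turning the $s$-integral into an integral over $\tau \asymp B$ in your coordinates, or $\tau\asymp 1$ after rescaling in the paper's), insert Stirling for $\gamma_f^\pm$, and analyze the resulting $\tau$-oscillatory integral. Your phase computation is consistent with the paper's after the reparametrization $\tau \mapsto B\tau - t$, and your treatment of case (2) — identifying the stationary equation $(\tau^2-\mu_f^2)B^2 = 4(\tau+t)^2 xN$, pinning $Nx\asymp T_1|T_2|$, and iteratively solving for $\tau_*$ in powers of $B/T_1$, $B/T_2$ — is the correct mechanism.

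However, there is a genuine gap in your description of case (1). You claim the decomposition $\Psi_\varphi^\pm = \mathbf{\Psi}_1 + \mathbf{\Psi}_2$ arises ``according to which of the two summands of $\gamma_f^\pm$ and which branch of $\log$ contributes.'' That is not what happens, and it would not produce the two distinct support conditions $Nx\ll T_1^{1+\varepsilon}$ versus $Nx\ll BT_1^{1+\varepsilon}$. The actual split is by $\tau$-range: $\mathbf{\Psi}_1$ is the integral over the set $\mathbf{I}_1$ where $B\tau$ lies within $T_1^\varepsilon$ of $T_1$ or $T_2$ (equivalently, in your coordinates, where $\tau$ is within $O(T_1^\varepsilon/B)$ of $\pm\mu_f$); there Stirling's expansion degenerates and one instead uses the crude bound \eqref{Gamma-2} together with the small measure of $\mathbf{I}_1$. $\mathbf{\Psi}_2$ is the complementary range, where Stirling is valid and one extracts an explicit oscillating phase and applies a second derivative test. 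Both summands of $\gamma_f^\pm$ appear in both pieces; the branches of $\log$ are not the issue.

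Two further points you elide. First, the support conditions in cases (1) and (3) come from moving the contour: bounding $\mathbf{\Psi}_1$ (resp.\ $\mathbf{\Psi}_2^0$) gives a factor $(Nx/T_1^{1+\varepsilon})^{-\sigma}$ (resp.\ $(Nx/(BT_1^{1+\varepsilon}))^{-\sigma}$) times a polynomial in $T_1$, so taking $\sigma$ large forces negligibility outside the stated range, and then one fixes $\sigma=-1/2$ to read off the size bound. Your ``pointwise stationary-phase amplitude bounds'' phrasing does not capture this $\sigma$-shift mechanism. Second, the paper uses Lemma \ref{lem: 2st derivative test, dim 1} (not the asymptotic stationary phase formula) to bound $\mathbf{\Psi}_2^0$ in case (1) and $\Psi_\varphi^\pm$ in case (3): in case (1) the second derivative of the outer phase is not uniform in size near the Stirling singular points, and the $\tau$-range near $\pm\mu_f$ must be excised before the second derivative test applies, which is why the proof further splits according to whether $\min\{|B\tau - T_1|,|B\tau - T_2|\}$ exceeds $\sqrt B$ or not. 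A bare stationary phase evaluation, as you suggest, is not justified there.
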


Note that $B\asymp N\Xi/(CQ)\gg N^{\varepsilon}$
(see \eqref{zeta range} and the range of $n$ in \eqref{after first Voronoi-2}).
So by Lemma \ref{integral-lemma-1}, the properties of $\Psi_{\varphi}^{\pm}(x,n,q)$
depend on the size of $C$.  By Lemma \ref{integral-lemma-1}, we distinguish
two cases according to $C\geq N^{1+\varepsilon}\Xi/(Q|T_2|)$ or not.

\subsection{The case of large modulus}

In this section, we consider the case $C\geq N^{1+\varepsilon}\Xi/(Q|T_2|)$ which
is equivalent to the condition $B\ll T_2^{1-\varepsilon}$.
In this case, we use the second statement of Lemma \ref{integral-lemma-1}.
By \eqref{after first Voronoi-2} and Lemma 4.1 (2),
\bea\label{after first Voronoi-3}
\mathscr{S}^*(C,\Xi)&=&N^{1/4}e\bigg(-\frac{T_1}{2\pi}\log\frac{T_1}{2e}
-\frac{T_2}{2\pi}\log\frac{|T_2|}{2e}\bigg)\sum_\pm\sum_{q\sim C}\frac{1}{q^{1/2+2it}}
\sum_{d|q}d\mu\left(\frac{q}{d}\right)\nonumber\\&&\times
\sum_{m\asymp C^2T_1|T_2|/N}\frac{\lambda_f(m)}{m^{1/2-it}}
\sum_{n\asymp  N\Xi^2/Q^2\atop n\equiv \pm m\bmod d}\frac{\lambda_g(n)}{n^{1/4}}
\mathfrak{I}^{\pm}(m,n,q)
+O_A\left(N^{-A}\right),
\eea
where
\bea\label{I-definition}
\mathfrak{I}^{\pm}(m,n,q)=V_{\natural}^{\pm}(\tau_*)e\bigg(\frac{2\tau_0n^{1/2}N^{1/2}}{\pi q}
+\frac{B}{2\pi}\sum_{j=1}^{K}g_j\bigg(\frac{B}{T_1},\frac{B}{T_2}\bigg)\tau_0^{j+1}\bigg)
\eea
with
$
\tau_0=\left(T_1|T_2|/(4Nm)\right)^{1/2}q.
$

\subsubsection{Cauchy-Schwarz and Poisson summation}
Applying the Cauchy-Schwarz inequality to \eqref{after first Voronoi-3} and
using the Rankin-Selberg estimate
\eqref{GL2: Rankin Selberg}, one sees that
\bea\label{Cauchy}
\mathscr{S}^*(C,\Xi)&\ll&\frac{N^{3/4}}{CT_1^{1/2}|T_2|^{1/2}}\sum_\pm\sum_{q\sim C}
q^{-1/2}\sum_{d|q}d
\bigg(\sum_{m\asymp C^2T_1|T_2|/N }
|\lambda_f(m)|^2\bigg)^{1/2}\nonumber\\&&\bigg(\sum_{m\asymp  C^2T_1|T_2|/N}\bigg|
\sum_{n\asymp  N\Xi^2/Q^2\atop n\equiv \pm m\bmod d}\lambda_g(n)n^{-1/4}
\mathfrak{I}^{\pm}(m,n,q)\bigg|^2\bigg)^{1/2}\nonumber\\
&\ll&N^{1/4}\sum_\pm\sum_{q\sim C}
q^{-1/2}\sum_{d|q}d\sqrt{\mathbf{\Omega}(q,d)},
\eea
where
\bea\label{Omega}
\mathbf{\Omega}(q,d)=\sum_{m\in \mathbb{Z}}\omega\left(\frac{m}{C^2T_1|T_2|/N}\right)\bigg|
\sum_{n\asymp  N\Xi^2/Q^2\atop n\equiv \pm m\bmod d}\lambda_g(n)n^{-1/4}\mathfrak{I}^{\pm}(m,n,q)\bigg|^2.
\eea
Here $\omega$ is a nonnegative smooth function on $(0,+\infty)$, supported on $[2/3,3]$, and such
that $\omega(x)=1$ for $x\in [1,2]$.

Opening the absolute square, we break the $m$-sum into congruence classes
modulo $d$ and apply the Poisson summation formula to the sum over $m$ to get
\bea\label{omega-bound}
\mathbf{\Omega}(q,d)
&=&\sum_{n_1\asymp N\Xi^2/Q^2}\lambda_g(n_1)n_1^{-1/4}
\sum_{n_2\asymp N\Xi^2/Q^2\atop n_2\equiv n_1\bmod d}\overline{\lambda_g(n_2)}n_2^{-1/4}\nonumber\\&&\times
\sum_{m\equiv \pm n_1\bmod d}\omega\left(\frac{m}{C^2T_1|T_2|/N}\right)\mathfrak{I}^{\pm}(m,n_1,q)
\overline{\mathfrak{I}^{\pm}(m,n_2,q)}\nonumber\\
&=&\frac{C^2T_1|T_2|}{dN}\sum_{n_1\asymp N\Xi^2/Q^2}\lambda_g(n_1)n_1^{-1/4}
\sum_{n_2\asymp N\Xi^2/Q^2\atop n_2\equiv n_1\bmod d}\overline{\lambda_g(n_2)}n_2^{-1/4}
\nonumber\\&&\times
\sum_{\widetilde{m}\in \mathbb{Z}}e\left(-\frac{\widetilde{m}n_1}{d}\right)
\mathcal{H}\left(\frac{C^2T_1|T_2|\widetilde{m}}{dN}\right),
\eea
where the integral $\mathcal{H}(x)=\mathcal{H}(x;n_1,n_2,q)$ is given by
\bea\label{H-integral}
\mathcal{H}(x)=\int_{\mathbb{R}}
\omega\left(\xi\right)
\mathfrak{I}^{\pm}\left(C^2T_1|T_2|\xi/N,n_1,q\right)
\overline{\mathfrak{I}^{\pm}\left(C^2T_1|T_2|\xi/N,n_2,q\right)}
\, e\left(-x\xi\right)\mathrm{d}\xi.
\eea

We have the following estimates for $\mathcal{H}(x)$, whose proofs we postpone to
Section \ref{proofs-of-technical-lemma}.

\begin{lemma}\label{integral:lemma} Assume $C$ satisfies
$C\geq N^{1+\varepsilon}\Xi/(Q|T_2|)$ and $n_i\sim N\Xi^2/Q^2$, $i=1,2.$

(1) We have $\mathcal{H}(x)\ll 1$ for any $x\in \mathbb{R}$.

(2) For $x\gg N^{1+\varepsilon}\Xi/(CQ)$, we have $\mathcal{H}(x)\ll_A N^{-A}$.

(3) For $x\neq 0$, we have
$\mathcal{H}(x)\ll |x|^{-1/2}$.

(4) $\mathcal{H}(0)$
is negligibly small unless $|n_1-n_2|\ll N^{\varepsilon}$.

\end{lemma}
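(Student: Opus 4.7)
My plan is to treat $\mathcal{H}(x)$ as a one-variable oscillatory integral, identify its phase and amplitude explicitly, and then dispatch the four assertions via the first and second derivative tests of Lemmas~\ref{lem: upper bound} and~\ref{lem: 2st derivative test, dim 1}. After substituting $m(\xi) = C^2T_1|T_2|\xi/N$ into \eqref{I-definition}, the parameter $\tau_0$ collapses to the inert function $\tau_0(\xi) = q/(2C\sqrt{\xi})$ of size $\asymp 1$ on the support of $\omega$. Thus
\begin{equation*}
\mathcal{H}(x) = \int_{\mathbb{R}} A(\xi)\, e(F(\xi))\, d\xi,
\end{equation*}
where the amplitude $A(\xi) = \omega(\xi)\, V_{\natural}^{\pm}(\tau_*(n_1,\xi))\, \overline{V_{\natural}^{\pm}(\tau_*(n_2,\xi))}$ satisfies $A^{(j)}(\xi)\ll_j 1$, and the phase is $F(\xi) = P(\xi,n_1) - P(\xi,n_2) - x\xi$ with
\begin{equation*}
P(\xi,n) = \frac{n^{1/2}N^{1/2}}{\pi C\sqrt{\xi}} + \frac{B}{2\pi}\sum_{j=1}^{K} g_j\!\left(\frac{B}{T_1},\frac{B}{T_2}\right)\tau_0(\xi)^{j+1}, \qquad B = \frac{2n^{1/2}N^{1/2}}{q}.
\end{equation*}

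The key step is the derivative computation. The hypothesis $C \geq N^{1+\varepsilon}\Xi/(Q|T_2|)$ forces $B \asymp N\Xi/(CQ) \ll |T_2|N^{-\varepsilon}$, so $B/T_j \ll N^{-\varepsilon}$ for $j=1,2$; combined with $g_j(y_1,y_2) \ll_j y_2^j$, this makes the tail $\sum_{j\geq 1}\cdots$ in $P(\xi,n)$ an $O(N^{-\varepsilon})$ perturbation of the leading term. Writing $D := N^{1/2}(n_1^{1/2}-n_2^{1/2})/C$, a direct calculation then gives, for $\xi\asymp 1$,
\begin{equation*}
F'(\xi) = -\frac{D}{2\pi\xi^{3/2}}\bigl(1+O(N^{-\varepsilon})\bigr) - x, \qquad F^{(k)}(\xi) \asymp D \ \text{ for every } k\geq 2.
\end{equation*}

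With these bounds in hand, the four estimates follow from standard derivative tests. Part~(1) is immediate from $|\mathfrak{I}^{\pm}|\ll 1$ and the compact support of $\omega$. For part~(2), note $|D| \leq N^{1/2}(n_1^{1/2}+n_2^{1/2})/C \ll N\Xi/(CQ)$, so $|x|\gg N^{1+\varepsilon}\Xi/(CQ)$ yields $|F'(\xi)|\gg|x|$ throughout; Lemma~\ref{lem: upper bound} applied with $R=|x|$, $Y=|D|$, $Q=U=Z=1$ then produces $\mathcal{H}(x)\ll_A N^{-A}$. For part~(3), the trivial bound covers $|x|\leq 1$, so assume $|x|\geq 1$: if $|x|\geq 2|D|$ then $|F'|\asymp|x|$ and integration by parts gives $\mathcal{H}(x)\ll|x|^{-1}\leq|x|^{-1/2}$, whereas if $|x|\leq 2|D|$ then $|F''|\asymp|D|\geq|x|/2$ and Lemma~\ref{lem: 2st derivative test, dim 1} gives $\mathcal{H}(x)\ll|D|^{-1/2}\leq|x|^{-1/2}$. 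For part~(4), setting $x=0$ gives $|F'(\xi)|\asymp|D|\asymp|n_1-n_2|Q/(C\Xi)$; since $C\leq Q$ and $\Xi\ll N^{\varepsilon}$ give $Q/(C\Xi)\gg N^{-\varepsilon}$, the hypothesis $|n_1-n_2|\gg N^{\varepsilon}$ (after slightly enlarging the implicit exponent) forces $|F'|\gg N^{\varepsilon}$, and repeated integration by parts yields $\mathcal{H}(0)\ll_A N^{-A}$.

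The one non-mechanical point is the derivative asymptotic $P^{(k)}(\xi,n) \asymp n^{1/2}N^{1/2}/C$, which requires showing that the higher-order terms in the $\tau_0$-expansion are genuinely subordinate to the leading term. This hinges on the estimate $B \ll |T_2| N^{-\varepsilon}$ furnished by the large-modulus range of $C$; once this asymptotic is verified, the rest of the proof is routine and I anticipate no further difficulty.
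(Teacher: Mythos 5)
Your strategy mirrors the paper's: rewrite $\mathcal{H}(x)$ as a single oscillatory integral, compute the phase derivatives, and apply the first/second derivative tests; parts~(1), (2), (4) and the $|x|\geq 2|D|$ branch of part~(3) are handled essentially as in the paper, and your organization of the case split in~(3) is arguably cleaner than the paper's. However, there is a genuine gap in the central derivative claim
$$
F'(\xi) = -\frac{D}{2\pi\xi^{3/2}}\bigl(1+O(N^{-\varepsilon})\bigr) - x,
\qquad F^{(k)}(\xi)\asymp D \ (k\geq 2).
$$
You justify this by observing that $B/T_j\ll N^{-\varepsilon}$, hence the $\tau_0$-tail of the single function $P(\xi,n)$ is an $O(N^{-\varepsilon})$-perturbation of its leading term $\asymp B$. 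That gives $|\text{tail of }P^{(k)}(\xi,n_i)|\ll B\,N^{-\varepsilon}$. But the phase is the \emph{difference} $F=P(\cdot,n_1)-P(\cdot,n_2)-x\xi$, whose leading contribution has size $D\asymp |n_1-n_2|\,Q/(C\Xi)$, and one readily checks (using $n_i\asymp N\Xi^2/Q^2$) that $D\ll B\,N^{-\varepsilon}$ precisely when $|n_1-n_2|\ll n\,N^{-\varepsilon}$ --- i.e., for \emph{most} pairs $(n_1,n_2)$. In that regime the naive bound $O(B\,N^{-\varepsilon})$ for the tail of the difference would swamp the leading term $\asymp D$, so the asserted $F^{(k)}\asymp D$ does not follow from what you wrote, and the second-derivative step in part~(3) (and the lower bound on $|F'(\xi)|$ at $x=0$ in part~(4)) would be unjustified. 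What is actually needed is a cancellation in the tail difference: since each $g_j$ is homogeneous of degree $j$ with $g_j\ll y_2^j$, one has
$$
B_1\,g_j\!\left(\tfrac{B_1}{T_1},\tfrac{B_1}{T_2}\right) - B_2\,g_j\!\left(\tfrac{B_2}{T_1},\tfrac{B_2}{T_2}\right)
= \bigl(B_1^{\,j+1}-B_2^{\,j+1}\bigr)\widetilde{g}_j\!\left(\tfrac1{T_1},\tfrac1{T_2}\right)
\ll |B_1-B_2|\cdot(B/|T_2|)^{\,j}\ll |B_1-B_2|\,N^{-\varepsilon},
$$
using $a^{j+1}-b^{j+1}=(a-b)(a^j+\cdots+b^j)$. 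Since $|B_1-B_2|\asymp D$, this shows the tail of $F^{(k)}$ is $\ll D\,N^{-\varepsilon}$ and hence genuinely subordinate to $D$. You flagged ``the one non-mechanical point'' but mislocated it --- the subtlety is not $P^{(k)}(\xi,n)\asymp n^{1/2}N^{1/2}/C$ (which is straightforward) but the cancellation in the difference of tails, and this factorization step is exactly what the paper carries out explicitly in its proof of part~(4). With that insertion your argument is correct and complete.
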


With estimates for $\mathcal{H}(x)$ ready, we now continue with the treatment
of $\mathbf{\Omega}(q,d)$
in \eqref{omega-bound}. by Lemma \ref{integral:lemma} (2), the contribution from the terms with
\bea\label{m range-final}
|\widetilde{m}|\gg \frac{dN^{2+\varepsilon}\Xi}{C^3QT_1|T_2|}:=N_1
\eea
to $\mathbf{\Omega}(q,d)$ is negligible. So
we only need to consider the range $0\leq |\widetilde{m}|\ll N_1$.

We treat the cases where $\widetilde{m}=0$ and $\tilde{m}\neq 0$ separately
and denote their contributions to $\mathbf{\Omega}(q,d)$ by
$\mathbf{\Omega}_0$ and $\mathbf{\Omega}_{\neq 0}$, respectively.

\subsubsection{The zero frequency}\label{The zero frequency}

Let $\mathbf{\Sigma}_{0}$ denote the contribution
of $\mathbf{\Omega}_0$ to \eqref{omega-bound}. Correspondingly, we denote its contribution to
\eqref{Cauchy} by $\mathbf{\Sigma}_0$.

\begin{lemma}\label{lemma:zero}
We have
\bna
\mathbf{\Sigma}_0
\ll  N^{\varepsilon}Q^{3/2}T_1^{1/2}|T_2|^{1/2}.
\ena
\end{lemma}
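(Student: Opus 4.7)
The plan is to isolate the zero-frequency $\widetilde{m}=0$ contribution in \eqref{omega-bound} and exploit Lemma \ref{integral:lemma} (4), which forces the two inner variables to be close. Specifically, I would start from
\[
\mathbf{\Omega}_0=\frac{C^2T_1|T_2|}{dN}\sum_{n_1\asymp N\Xi^2/Q^2}\frac{\lambda_g(n_1)}{n_1^{1/4}}\sum_{\substack{n_2\asymp N\Xi^2/Q^2 \\ n_2\equiv n_1\,(\bmod\,d)}}\frac{\overline{\lambda_g(n_2)}}{n_2^{1/4}}\,\mathcal{H}(0;n_1,n_2,q)
\]
and apply Lemma \ref{integral:lemma} (4) to truncate to $|n_1-n_2|\ll N^{\varepsilon}$ up to a negligible error, and Lemma \ref{integral:lemma} (1) to bound the surviving $\mathcal{H}(0)\ll 1$.

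Next I would estimate the truncated double sum by the elementary bound $|\lambda_g(n_1)\overline{\lambda_g(n_2)}|\ll |\lambda_g(n_1)|^2+|\lambda_g(n_2)|^2$ together with the observation that for each $n_1$ there are only $O(N^{\varepsilon})$ admissible $n_2$. Invoking the Rankin--Selberg estimate \eqref{Rankin-Selberg} for $g$ and using $n_i^{-1/4}\asymp(N\Xi^2/Q^2)^{-1/4}$, this yields
\[
\mathbf{\Omega}_0\ll N^{\varepsilon}\frac{C^2T_1|T_2|}{dN}\cdot\Big(\frac{N\Xi^2}{Q^2}\Big)^{-1/2}\cdot\frac{N\Xi^2}{Q^2}=N^{\varepsilon}\frac{C^2T_1|T_2|\,\Xi}{dQN^{1/2}}.
\]

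I would then plug this back into \eqref{Cauchy} and evaluate the remaining sums over $q\sim C$ and $d\mid q$. Using $\sum_{d\mid q}d^{1/2}\ll \tau(q)\,q^{1/2}\ll N^{\varepsilon}q^{1/2}$ and summing trivially over $q\sim C$ gives
\[
\mathbf{\Sigma}_0\ll N^{1/4+\varepsilon}\cdot\frac{CT_1^{1/2}|T_2|^{1/2}\Xi^{1/2}}{Q^{1/2}N^{1/4}}\cdot C\ll N^{\varepsilon}\,\frac{C^2\,T_1^{1/2}|T_2|^{1/2}\Xi^{1/2}}{Q^{1/2}}.
\]
Finally I would absorb the dependence on $C$ and $\Xi$ using $C\leq Q$ from \eqref{C range} and $\Xi\ll N^{\varepsilon}$ from the dyadic range, landing exactly on the claimed bound $\mathbf{\Sigma}_0\ll N^{\varepsilon}Q^{3/2}T_1^{1/2}|T_2|^{1/2}$.

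There is no essential obstacle here: all the analytic difficulty has been packaged into Lemma \ref{integral:lemma}, and once parts (1) and (4) are in hand the remainder is an elementary Rankin--Selberg/divisor-sum estimate. The only point worth care is ensuring that the near-diagonal restriction $|n_1-n_2|\ll N^{\varepsilon}$ is applied uniformly in $d$ so that the $O(N^{\varepsilon})$ count of admissible $n_2$ holds regardless of the size of $d$ (which it does, since the constraint $|n_1-n_2|\ll N^{\varepsilon}$ already limits $n_2$ to a window of length $N^{\varepsilon}$ independently of the congruence modulus).
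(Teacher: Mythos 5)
Your argument is correct and coincides essentially verbatim with the paper's own proof: you truncate via Lemma \ref{integral:lemma} (4), bound $\mathcal{H}(0)\ll 1$ by (1), apply the Rankin--Selberg estimate \eqref{Rankin-Selberg} together with $|\lambda_g(n_1)\lambda_g(n_2)|\le|\lambda_g(n_1)|^2+|\lambda_g(n_2)|^2$, and feed the resulting bound $\mathbf{\Omega}_0\ll N^{\varepsilon}C^2T_1|T_2|\Xi/(dQN^{1/2})$ into \eqref{Cauchy}. The remark about the near-diagonal count being uniform in $d$ is a reasonable sanity check but is as automatic as you say.
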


\begin{proof}
Splitting the sum over $n_1$ and $n_2$ according as $n_1=n_2$ or not,
and applying Lemma \ref{integral:lemma} (4),
the Rankin-Selberg estimate \eqref{GL2: Rankin Selberg}
and using the inequality
$|\lambda_g(n_1)\lambda_g(n_2)|\leq |\lambda_g(n_1)|^2+|\lambda_g(n_2)|^2$, we have
\bna
\mathbf{\Omega}_0
&\ll&
\frac{M}{dtN_1^{1/2}}
\mathop{\sum\sum}_{n_1,n_2\asymp N\Xi^2/Q^2\atop |n_1-n_2|\ll N^{\varepsilon}}
|\lambda_g(n_1)||\lambda_g(n_2)|
\\
&\ll&\frac{C^2T_1|T_2|}{dN}\frac{Q}{N^{1/2}\Xi}\sum_{n_1\asymp N\Xi^2/Q^2}|\lambda_g(n_1)|^2
\sum_{n_2\asymp N\Xi^2/Q^2\atop |n_1-n_2|\ll N^{\varepsilon}}1\\
&\ll&N^{\varepsilon}\frac{C^2T_1|T_2|\Xi}{dN^{1/2}Q}.
\ena
This bound when substituted in place of $\mathbf{\Omega}(q,d)$ into \eqref{Cauchy} yields that
\bna
\mathbf{\Sigma}_0\ll N^{1/4+\varepsilon}\sum_{\pm}\sum_{q\sim C}q^{-1/2}\sum_{d|q}d
\frac{CT_1^{1/2}|T_2|^{1/2}\Xi^{1/2}}{d^{1/2}N^{1/4}Q^{1/2}}
\ll N^{\varepsilon}Q^{3/2}T_1^{1/2}|T_2|^{1/2}.
\ena
This proves the lemma.

\end{proof}

\subsubsection{The non-zero frequencies}\label{The non-zero frequencies}

Recall $\mathbf{\Omega}_{\neq 0}$ denotes the contribution from the terms with $\tilde{m}\neq 0$
to $\mathbf{\Omega}(q,d)$ in
\eqref{omega-bound}. Correspondingly, we denote its contribution to
\eqref{Cauchy} by $\mathbf{\Sigma}_{\neq 0}$. Using the inequality
$|\lambda_g(n_1)\lambda_g(n_2)|\leq |\lambda_g(n_1)|^2+|\lambda_g(n_2)|^2$, we have
\bea\label{bound-med}
\mathbf{\Omega}_{\neq 0}\ll
\frac{C^2QT_1|T_2|}{dN^{3/2}\Xi}\sum_{n_1\asymp N\Xi^2/Q^2}|\lambda_g(n_1)|^2
\sum_{n_2\asymp N\Xi^2/Q^2\atop n_2\equiv n_1\bmod d}\;
\sum_{0\neq \widetilde{m}\ll N_1}\bigg|\mathcal{H}\left(\frac{C^2T_1|T_2|\widetilde{m}}{dN}\right)\bigg|,
\eea
where $N_1$ is defined in \eqref{m range-final}.

\begin{lemma}\label{lemma:nonzero}
Assume
\bea\label{assumption: range 2}
Q<N^{1/3}.
\eea
We have
\bna
\mathbf{\Sigma}_{\neq 0}\ll N^{5/4+\varepsilon}/Q.
\ena
\end{lemma}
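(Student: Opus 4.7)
The plan is to substitute the pointwise bound $\mathcal{H}(x)\ll |x|^{-1/2}$ from Lemma \ref{integral:lemma} (3) into \eqref{bound-med}, evaluate the resulting sum over the non-zero frequencies $\widetilde{m}$, and then estimate what remains via the Rankin--Selberg bound \eqref{Rankin-Selberg} and the congruence $n_2\equiv n_1\bmod d$.

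First, applying Lemma \ref{integral:lemma} (3) gives
$$
\left|\mathcal{H}\!\left(\frac{C^2T_1|T_2|\widetilde{m}}{dN}\right)\right|\ll \left(\frac{dN}{C^2T_1|T_2||\widetilde{m}|}\right)^{1/2},
$$
and using the standard estimate $\sum_{1\leq |\widetilde{m}|\ll N_1}|\widetilde{m}|^{-1/2}\ll N_1^{1/2}$ with $N_1$ as in \eqref{m range-final}, I would obtain
$$
\sum_{0\neq \widetilde{m}\ll N_1}\left|\mathcal{H}\!\left(\frac{C^2T_1|T_2|\widetilde{m}}{dN}\right)\right|\ll \frac{dN^{3/2+\varepsilon}\Xi^{1/2}}{C^{5/2}Q^{1/2}T_1|T_2|}.
$$
Next, I would bound the inner $n_1$-sum by Rankin--Selberg, $\sum_{n_1\asymp N\Xi^2/Q^2}|\lambda_g(n_1)|^2\ll N^{1+\varepsilon}\Xi^2/Q^2$, and count the residues modulo $d$ as $\#\{n_2\asymp N\Xi^2/Q^2:\,n_2\equiv n_1\bmod d\}\ll N\Xi^2/(dQ^2)+1$. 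Substituting these into \eqref{bound-med} leads to the two-term bound
$$
\mathbf{\Omega}_{\neq 0}\ll \frac{N^{2+\varepsilon}\Xi^{7/2}}{dC^{1/2}Q^{7/2}}+\frac{N^{1+\varepsilon}\Xi^{3/2}}{C^{1/2}Q^{3/2}}.
$$

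Finally, I would plug $\sqrt{\mathbf{\Omega}_{\neq 0}}$ into \eqref{Cauchy} and execute the divisor sum, using $\sum_{q\sim C}q^{-1/2}\sum_{d|q}d^{1/2}\ll C^{1+\varepsilon}$ for the dense piece and $\sum_{q\sim C}q^{-1/2}\sum_{d|q}d\ll C^{3/2+\varepsilon}$ for the sparse piece. Together with $C\leq Q$ and $\Xi\ll N^\varepsilon$, the dense piece contributes
$$
N^{1/4}\cdot C^{1+\varepsilon}\cdot\frac{N^{1+\varepsilon}\Xi^{7/4}}{C^{1/4}Q^{7/4}}\ll\frac{N^{5/4+\varepsilon}C^{3/4}}{Q^{7/4}}\ll \frac{N^{5/4+\varepsilon}}{Q},
$$
while the sparse piece contributes $N^{1/4}\cdot C^{3/2+\varepsilon}\cdot N^{1/2+\varepsilon}\Xi^{3/4}/(C^{1/4}Q^{3/4})\ll N^{3/4+\varepsilon}Q^{1/2}$.

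The main (and indeed only) delicate point is balancing the two pieces: the sparse-residue contribution $N^{3/4+\varepsilon}Q^{1/2}$ is $\ll N^{5/4+\varepsilon}/Q$ precisely when $Q^{3/2}\leq N^{1/2}$, i.e. $Q\leq N^{1/3}$, which is exactly the hypothesis \eqref{assumption: range 2}. This ``$+1$'' term in the residue count is what necessitates the stronger assumption on $Q$ (compared to \eqref{assumption 1}); everything else is a routine combination of Lemma \ref{integral:lemma} (3), Rankin--Selberg, and standard divisor bounds.
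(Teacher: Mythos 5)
Your proposal is correct and follows essentially the same approach as the paper: apply Lemma \ref{integral:lemma} (3) and \eqref{Rankin-Selberg} to $\mathbf{\Omega}_{\neq 0}$ in \eqref{bound-med}, count residues modulo $d$, and execute the $q,d$-sums in \eqref{Cauchy}. The one minor difference is that the paper splits the $\widetilde m$-sum at $N_2 = dN^{1+\varepsilon}/(C^2T_1|T_2|)$ and uses the trivial bound $\mathcal{H}(x)\ll 1$ on the small range and $|x|^{-1/2}$ on the rest, whereas you use $|x|^{-1/2}$ throughout (a valid, if slightly lossier, pointwise bound when $|x|<1$); since the tail contribution dominates, both give the same estimate, and your identification of where the hypothesis $Q<N^{1/3}$ enters matches the paper exactly.
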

\begin{proof}
For $x=C^2T_1|T_2|\widetilde{m}/(dN)$,
we split the sum over $\tilde{m}$ according to
$x\ll N^{\varepsilon}$ or not.
Set
\bea\label{N3}
N_2:=\frac{dN^{1+\varepsilon}}{C^2T_1|T_2|}.
\eea
For
$0\neq \tilde{m}\ll N_2$, we use the bound
$\mathcal{H}(x)\ll 1$ in Lemma \ref{integral:lemma}
(1), and for the remaining part we apply the bound
$\mathcal{H}(x)\ll |x|^{-1/2}$ in Lemma \ref{integral:lemma}
(3). By \eqref{bound-med}, we have
\bna
\mathbf{\Omega}_{\neq 0}&\ll&\frac{C^2QT_1|T_2|}{dN^{3/2}\Xi}\sum_{n_1\asymp N\Xi^2/Q^2}|\lambda_g(n_1)|^2
\sum_{n_2\asymp N\Xi^2/Q^2\atop n_2\equiv n_1\bmod d}\;
\sum_{0\neq \widetilde{m}\ll N_2}1\\
&&+\frac{C^2QT_1|T_2|}{dN^{3/2}\Xi}\sum_{n_1\asymp N\Xi^2/Q^2}|\lambda_g(n_1)|^2
\sum_{n_2\asymp N\Xi^2/Q^2\atop n_2\equiv n_1\bmod d}\;
\sum_{N_2\ll \widetilde{m}\ll N_1}\left(\frac{C^2T_1|T_2|\widetilde{m}}{dN}\right)^{-1/2}\\
  &\ll&\frac{C^2T_1|T_2|\Xi N_2}{dQ N^{1/2}}\left(1+\frac{N\Xi^2}{dQ^2}\right)
+\frac{CT_1^{1/2}|T_2|^{1/2}|\Xi N_1^{1/2}}{d^{1/2}Q}\left(1+\frac{N\Xi^2}{dQ^2}\right)\nonumber\\
    &\ll&\frac{CT_1^{1/2}|T_2|^{1/2}\Xi }{d^{1/2}Q}\left(1+\frac{N\Xi^2}{dQ^2}\right)
    \left(\frac{CT_1^{1/2}|T_2|^{1/2}| N_2}{d^{1/2}N^{1/2}}+N_1^{1/2}\right).
\ena
Here we have applied \eqref{Rankin-Selberg}.
By \eqref{m range-final} and \eqref{N3},
\bna
\mathbf{\Omega}_{\neq 0}
&\ll&\frac{CT_1^{1/2}|T_2|^{1/2}\Xi }{Q}\left(1+\frac{N\Xi^2}{dQ^2}\right)
\left(\frac{N^{1/2}}{CT_1^{1/2}|T_2|^{1/2}}+\frac{N\Xi^{1/2}}{C^{3/2}Q^{1/2}T_1^{1/2}|T_2|^{1/2}}\right)\\
&\ll&\frac{N^{1+\varepsilon}}{C^{1/2}Q^{3/2}}\left(1+\frac{N}{dQ^2}\right)
\ena
since $\Xi\ll N^{\varepsilon}$ and $Q<N^{1/2-\varepsilon}$ by \eqref{assumption 1}.
This bound when substituted in place of $\mathbf{\Omega}(q,d)$ in \eqref{Cauchy}
gives that
\bna
\mathbf{\Sigma}_{\neq 0}
&\ll&N^{1/4}\sum_{q\sim C}q^{-1/2}\sum_{d|q}
\frac{dN^{1/2+\varepsilon}}{C^{1/4}Q^{3/4}}\left(1+\frac{N^{1/2}}{d^{1/2}Q}\right)\\
&\ll&N^{3/4}Q^{-3/4}C^{3/4}\left(C^{1/2}+N^{1/2}/Q\right)\\
&\ll& N^{3/4+\varepsilon}\left(Q^{1/2}+N^{1/2}/Q\right)\\
&\ll&N^{5/4+\varepsilon}/Q
\ena
provided that $Q<N^{1/3}$.

\end{proof}

\subsection{The case of small modulus}

In this section, we deal with the case $1\ll C\leq N^{1+\varepsilon}\Xi/(Q|T_2|)$
which is equivalent to the condition $B\gg T_2^{1-\varepsilon}$ (see Lemma \ref{integral-lemma-1}).
In this case, we will use the first and third statement of Lemma \eqref{after first Voronoi-2}.
By \eqref{after first Voronoi-2}, \eqref{individual bound} and (1) and (3) of Lemma \eqref{after first Voronoi-2},
\bea\label{small cases}
\mathscr{S}^*(C,\Xi)&\ll&\frac{C^{1/2}}{N^{1/4}}\left(\frac{N\Xi^2}{Q^2}\right)^{\vartheta-1/4}\sum_\pm\sum_{q\sim C}\sum_{d|q}d
\sum_{m\geq 1}\frac{|\lambda_f(m)|}{m}
\sum_{n\asymp  N\Xi^2/Q^2\atop n\equiv \pm m\bmod d}
\left|\Psi_{\varphi}^{\pm}\left(\frac{m}{q^2},n,q\right)\right|\\
&\ll&\mathbf{R}_1+\mathbf{R}_2+\mathbf{1}_{C\leq N\Xi/(QT_1^{1+\varepsilon})}\mathbf{R}_3,
\eea
where $\mathbf{1}_{S}=1$ is $S$ is true and equals 0 otherwise,
\bna
\mathbf{R}_1&=&\frac{C^{1/2}}{N^{1/4}}\left(\frac{N\Xi^2}{Q^2}\right)^{\vartheta-1/4}\sum_{q\sim C}\sum_{d|q}d
\sum_{m\ll C^2T_1^{1+\varepsilon}/N}\frac{|\lambda_f(m)|}{m}
\sum_{n\asymp  N\Xi^2/Q^2\atop n\equiv \pm m\bmod d}
\left(\frac{BNm}{q^2}\right)^{1/2},\\
\mathbf{R}_2&=&\frac{C^{1/2}}{N^{1/4}}\left(\frac{N\Xi^2}{Q^2}\right)^{\vartheta-1/4}\sum_{q\sim C}\sum_{d|q}d
\sum_{m\ll BC^2T_1^{1+\varepsilon}/N}\frac{|\lambda_f(m)|}{m}
\sum_{n\asymp  N\Xi^2/Q^2\atop n\equiv \pm m\bmod d}
\left(\frac{Nm}{q^2}\right)^{1/2},
\ena
and
\bna
\mathbf{R}_3=\frac{C^{1/2}}{N^{1/4}}\left(\frac{N\Xi^2}{Q^2}\right)^{\vartheta-1/4}\sum_{q\sim C}\sum_{d|q}d
\sum_{m\asymp C^2T_1|T_2|/N}\frac{|\lambda_f(m)|}{m}
\sum_{n\asymp  N\Xi^2/Q^2\atop n\equiv \pm m\bmod d}
\left(\frac{Nm}{q^2}\right)^{1/2}.
\ena
Recall that $B\asymp N\Xi/(CQ)$, $\Xi\ll N^{\varepsilon}$ and $C\leq N^{1+\varepsilon}\Xi/(Q|T_2|)$.
By \eqref{Rankin-Selberg}, we have
\bna
\mathbf{R}_1&\ll&\frac{B^{1/2}N^{1/4}}{C^{1/2}}\left(\frac{N\Xi^2}{Q^2}\right)^{\vartheta-1/4}
\sum_{q\sim C}\sum_{d|q}d
\left(\frac{C^2T_1^{1+\varepsilon}}{N}\right)^{1/2}
\left(1+\frac{N\Xi^2}{dQ^2}\right)\\
&\ll&T_1^{1/2+\varepsilon}\left(\frac{N\Xi^2}{Q^2}\right)^{\vartheta}C
\left(C+\frac{N\Xi^2}{Q^2}\right)\\
&\ll&T_1^{1/2+\varepsilon}\left(\frac{N\Xi^2}{Q^2}\right)^{\vartheta}
\frac{N\Xi}{Q|T_2|}
\left(\frac{N\Xi}{Q|T_2|}+\frac{N\Xi^2}{Q^2}\right)\\
&\ll&T_1^{1/2+\varepsilon}\left(\frac{N^{1+\varepsilon}}{Q^2}\right)^{\vartheta+1}
\frac{N^{1+\varepsilon}}{Q|T_2|}
\ena
assuming
\bea\label{assumption 3}
Q<|T_2|.
\eea
Thus
\bea\label{R-1}
\mathbf{R}_1\ll \frac{N^{2+\vartheta+\varepsilon}T_1^{1/2}}{Q^{3+2\vartheta}|T_2|}.
\eea
Similarly,
\bea\label{R-2}
\mathbf{R}_2&\ll&\frac{N^{1/4}}{C^{1/2}}\left(\frac{N\Xi^2}{Q^2}\right)^{\vartheta-1/4}
\sum_{q\sim C}\sum_{d|q}d
\left(\frac{BC^2T_1^{1+\varepsilon}}{N}\right)^{1/2}
\left(1+\frac{N\Xi^2}{dQ^2}\right)\nonumber\\
&\ll&\frac{N^{2+\vartheta+\varepsilon}T_1^{1/2}}{Q^{3+2\vartheta}|T_2|}
\eea
and for $C\leq N\Xi/(QT_1^{1+\varepsilon})$, we have
\bea\label{R-3}
\mathbf{R}_3&\ll&\frac{N^{1/4}}{C^{1/2}}\left(\frac{N\Xi^2}{Q^2}\right)^{\vartheta-1/4}
\sum_{q\sim C}\sum_{d|q}d
\left(\frac{C^2T_1|T_2|}{N}\right)^{1/2}
\left(1+\frac{N\Xi^2}{dQ^2}\right)\nonumber\\
&\ll&\frac{T_1^{1/2+\varepsilon}|T_2|^{1/2}Q^{1/2}}{N^{1/2}\Xi^{1/2}}
\left(\frac{N\Xi^2}{Q^2}\right)^{\vartheta}C^{3/2}
\left(C+\frac{N\Xi^2}{Q^2}\right)\nonumber\\
&\ll&\frac{T_1^{1/2+\varepsilon}|T_2|^{1/2}Q^{1/2}}{N^{1/2}\Xi^{1/2}}\left(\frac{N\Xi^2}{Q^2}\right)^{\vartheta}
\left(\frac{N\Xi}{QT_1^{1+\varepsilon}}\right)^{3/2}
\left(\frac{N\Xi}{QT_1^{1+\varepsilon}}+\frac{N\Xi^2}{Q^2}\right)\nonumber\\
&\ll&\frac{N^{2+\vartheta+\varepsilon}T_1^{1/2}}{Q^{3+2\vartheta}|T_2|}
\eea
under the assumption in \eqref{assumption 3}.

By \eqref{small cases} and \eqref{R-1}-\eqref{R-3}, we conclude that for $1\ll C\leq N^{1+\varepsilon}\Xi/(Q|T_2|)$,
\bea\label{small contribution}
\mathscr{S}^*(C,\Xi)\ll \frac{N^{2+\vartheta+\varepsilon}T_1^{1/2}}{Q^{3+2\vartheta}|T_2|}.
\eea

\subsection{Conclusion}
By inserting the upper bounds in Lemmas \ref{lemma:zero}
and \ref{lemma:nonzero} into \eqref{Cauchy}, we have
\bna
\mathscr{S}^*(C,\Xi)\ll N^{\varepsilon}
\left(Q^{3/2}T_1^{1/2}|T_2|^{1/2}+N^{5/4}/Q\right),
\ena
under the assumption $N^{1+\varepsilon}\Xi/(Qt) \leq C\ll Q$ and
\bea
\label{final-assumption-Q}
Q<N^{1/3}
\eea
which is a combination of \eqref{assumption 1} and
\eqref{assumption: range 2}. We set $Q=N^{1/2}/(T_1|T_2|)^{1/5}$ to balance the contribution. Then
this $Q$ also satisfies \eqref{assumption 3} and
for $N^{1+\varepsilon}\Xi/(Qt) \leq C\ll Q$,
\bea\label{large estimate}
\mathscr{S}^*(C,\Xi)\ll N^{3/4+\varepsilon}(T_1|T_2|)^{1/5}\ll  N^{3/4+\varepsilon}T_1^{2/5}
\eea
provided $N<(T_1|T_2|)^{6/5}$, which is satisfactory since we only need this estimate
in the range $N<T_1^{1+\varepsilon}|T_2|$. Moreover, for this choice of $Q$, when $C\leq X^{1+\varepsilon}\Xi/(Qt)$,
by \eqref{small contribution}, $\mathscr{S}^*(C,\Xi)$ is bounded by
\bea\label{small estimate}
\frac{N^{2+\vartheta+\varepsilon}T_1^{1/2}}{Q^{3+2\vartheta}|T_2|}=
N^{1/2+\varepsilon}T_1^{11/10+2\vartheta/5}|T_2|^{-2/5+2\vartheta/5}\ll
N^{1/2+\varepsilon}T_1^{39/50+2\vartheta/25}
\eea
by the condition $|T_2|\gg T_1^{4/5-\varepsilon}$ in \eqref{T2 condition}.
Substituting the estimates in \eqref{large estimate} and \eqref{small estimate}
for $\mathscr{S}^*(C,\Xi)$ into \eqref{C range}, we obtain
\bna
\mathcal{S}(N)\ll N^{3/4+\varepsilon}T_1^{2/5}+
N^{1/2+\varepsilon}T_1^{39/50+2\vartheta/25}.
\ena
Then by \eqref{step 1},
\bna
L\left(\frac{1}{2}+it,f\otimes g\right)&\ll&  T_1^{\varepsilon}
\sup_{ T_1^{9/5}\ll N\ll T_1^{1+\varepsilon}|T_2|}N^{1/4+\varepsilon}T_1^{2/5}+
T_1^{39/50+2\vartheta/25}+ T_1^{9/10}\\
&\ll&T_1^{9/10+\varepsilon}+T_1^{39/50+2\vartheta/25+\varepsilon}.
\ena
Note that the second term is dominated by
the first term since
we can take $\vartheta=7/64$ by \cite{KS}.
This completes the proof of Theorem \ref{subconvexity}.

\section{Estimation of integrals}\label{proofs-of-technical-lemma}

We first prove Lemma  \ref{integral-lemma-1}.
\begin{proof}[Proof of Lemma \ref{integral-lemma-1}]
The proof is similar as Huang \cite{HB2}.
Let $s=\sigma+i\tau$.
Making changes of variables $\tau\rightarrow \tau-t$ and $y\rightarrow Ny^2$ in \eqref{integral after Voronois}, one has
\bna
\Psi_{\varphi}^{\pm}(x,n,q)&=&\frac{1}{2\pi^2}\int_{\mathbb{R}}(\pi^2 Nx)^{-\sigma-i\tau+it}
\gamma_f^{\pm}(\sigma+i\tau-it)\varphi_{\sigma}(n,q,\tau)\mathrm{d}\tau,
\ena
where
\bna
\varphi_{\sigma}(n,q,\tau)=
\int_0^{\infty}
\widetilde{V}\left(y^2\right)y^{-2\sigma-1}\exp\left(i\varrho(y)\right)
\mathrm{d}y
\ena
with $\varrho(y)=2\pi By-2\tau\log y$ and $B=2 n^{1/2}N^{1/2}/q.$
Note that
\bna
\varrho'(y)&=&2\pi B-2\tau/y,\\
\varrho^{(j)}(y)
&=&2\tau (-1)^j(j-1)! y^{-j}\asymp |\tau|, \qquad j=2,3,\ldots.
\ena
By repeated integration by parts one shows that $\varphi_{\sigma}(n,q,\tau)$
is negligibly small unless $\tau>0$ and $\tau \asymp B$.
The stationary point is $y_0=\tau/(\pi B)$.
Recall that $B\asymp N\Xi/(CQ)\gg N^{\varepsilon}$.
Applying Lemma \ref{lemma:exponentialintegral} (2) with $X=Z=1$ and
$Y=R=\tau\gg N^{\varepsilon}$, we obtain
\bna
\varphi_{\sigma}(n,q,\tau)
=\tau^{-1/2}
V^{\natural}_{\sigma}\left(\frac{\tau}{\pi B}\right)
e\left(-\frac{\tau}{\pi}\log\frac{\tau}{\pi e B}\right)+O_A\left(N^{-A}\right),
\ena
where $V_{\sigma}^{\natural}(x)$ is an inert function (depending on $A$ and $\sigma$)
supported on $x\asymp 1$. Assembling these results, we obtain
\bna
\Psi_{\varphi}^{\pm}(x,n,q)&=&\frac{1}{2\pi^2}\int_0^{\infty}
(\pi^2 Nx)^{-\sigma-i\tau+it}\gamma_f^{\pm}\left(\sigma+i\tau-it\right)\nonumber\\&&\times
\tau^{-1/2}V^{\natural}_{\sigma}\left(\frac{\tau}{\pi B}\right)
e\left(-\frac{\tau}{\pi}\log\frac{\tau}{\pi e B}\right)\mathrm{d}\tau+O_A\left(N^{-A}\right).
\ena
Making a change of variable $\tau\rightarrow B\tau$,
\bea\label{The integral:1}
\Psi_{\varphi}^{\pm}(x,n,q)&=&\frac{B^{1/2}}{2\pi^2}\int_0^{\infty}
(\pi^2 Nx)^{-\sigma-iB\tau+it}\gamma_f^{\pm}\left(\sigma+iB\tau-it\right)\nonumber\\&&\times
\tau^{-1/2}V^{\natural}_{\sigma}\left(\frac{\tau}{\pi}\right)
e\left(-\frac{B\tau}{\pi}\log\frac{\tau}{\pi e}\right)\mathrm{d}\tau+O_A\left(N^{-A}\right),
\eea
where by \eqref{gamma},
\bea\label{Gamma3}
\gamma_f^{\pm}\left(\sigma+iB\tau-it\right)=
\prod_{j=1,2}\frac{\Gamma\left(\frac{1+\sigma+i(B\tau-T_j)}{2}\right)}
{\Gamma\left(\frac{-\sigma-i(B\tau-T_j)}{2}\right)}\pm
\prod_{j=1,2}\frac{\Gamma\left(\frac{2+\sigma+i(B\tau-T_j)}{2}\right)}
{\Gamma\left(\frac{1-\sigma-i(B\tau-T_j)}{2}\right)}.
\eea

(1) For $T_2^{1-\varepsilon}<B<T_1^{1+\varepsilon}$, we divide the range of
$\tau$ into two pieces: $$(0,\infty)=\{\tau||T_1-B\tau|\leq T_1^{\varepsilon}\,
\text{or}\,|T_2-B\tau|\leq T_1^{\varepsilon}\}
\cup \{\tau||T_1-B\tau|>T_1^{\varepsilon}, |T_2-B\tau|>T_1^{\varepsilon}\}:=\mathbf{I}_1+\mathbf{I}_2$$
and correspondingly denote by the integral over $\mathbf{I}_j$ by
$\mathbf{\Psi}_j$, $j=1,2$. Then by \eqref{Gamma-2},
\bna
\mathbf{\Psi}_1&\ll&B^{1/2}(Nx)^{-\sigma}\int_{\mathbf{I}_1}
(|T_1-B\tau||T_2-B\tau|)^{\sigma+1/2}
\left|V^{\natural}_{\sigma}\left(\frac{\tau}{\pi}\right)\right|\mathrm{d}\tau\\
&\ll&B^{1/2}T_1^{1/2+\varepsilon}(Nx/T_1^{1+\varepsilon})^{-\sigma}.
\ena
By taking $\sigma$ sufficiently large, one sees that $\Psi_1$
is negligibly small unless $Nx\ll T_1^{1+\varepsilon}$, in which case by taking $\sigma=-1/2$ we have
the estimate
\bea\label{p-1}
\mathbf{\Psi}_1\ll (BNx)^{1/2}.
\eea

For $\tau\in \mathbf{I}_2$, by \eqref{Gamma3} and Stirling's approximation
in \eqref{Stirling approximation},
we have
\bea\label{Gamma4}
&&\gamma_f^{\pm}\left(\sigma+iB\tau-it\right)=\left(\prod_{j=1,2}
\left(\frac{|T_j-B\tau|}{2e}\right)^{i(B\tau-T_j)}
|T_j-B\tau|^{\sigma+1/2}\right)\nonumber
\\&&\times
\big(h_{\sigma,1}(B\tau-T_1)h_{\sigma,1}(B\tau-T_2)\pm h_{\sigma,2}(B\tau-T_1)h_{\sigma,2}(B\tau-T_2)\big)+
O_{\sigma,K_3}\big(T_1^{-K_3}\big).
\eea
Then by \eqref{The integral:1} and \eqref{Gamma4},
\bea\label{The integral:20}
\mathbf{\Psi}_2&=&\frac{B^{1/2}}{2\pi^2}\int_0^{\infty}
(\pi^2 Nx)^{-\sigma-iB\tau+it}\left(\prod_{j=1,2}
\left(\frac{|T_j-B\tau|}{2e}\right)^{i(B\tau-T_j)}
|T_j-B\tau|^{\sigma+1/2}\right)\nonumber
\\&&\times
\big(h_{\sigma,1}(B\tau-T_1)h_{\sigma,1}(B\tau-T_2)\pm h_{\sigma,2}(B\tau-T_1)h_{\sigma,2}(B\tau-T_2)\big)\nonumber\\&&\times
\tau^{-1/2}V^{\natural}_{\sigma}\left(\frac{\tau}{\pi}\right)
e\left(-\frac{B\tau}{\pi}\log\frac{\tau}{\pi e}\right)\mathrm{d}\tau+\mathbf{\Psi}_3,
\eea
where $h_{\sigma,j}(x)$, $j=1,2$, satisfy $h_{\sigma,j}(x)\ll_{\sigma,j,K_4} 1$ and
$
x^{\ell}h_{\sigma,j}^{(\ell)}(x)\ll_{\sigma,j,\ell,K_4} x^{-1}
$
for any integer $\ell\geq 1$, and
\bna
\mathbf{\Psi}_3\ll B^{1/2}(Nx)^{-\sigma}\int_{\mathbf{I}_1}(|T_1-B\tau||T_2-B\tau|)^{\sigma+1/2}\left|V^{\natural}_{\sigma}
\left(\frac{\tau}{\pi}\right)\right|\mathrm{d}\tau
\ll B^{1/2}T_1^{1/2+\varepsilon}(Nx/T_1^{1+\varepsilon})^{-\sigma}
\ena
which can be negligibly small unless $Nx\ll T_1^{1+\varepsilon}$,
in which case by taking $\sigma=-1/2$ we have
\bea\label{p-2}
\mathbf{\Psi}_3\ll (BNx)^{1/2}.
\eea
Denote the first term in \eqref{The integral:20} by $\mathbf{\Psi}_2^0$. Then
\bna
\mathbf{\Psi}_2^0\ll B^{1/2}(Nx)^{-\sigma}\int_{\tau\asymp 1}(|T_1-B\tau||T_2-B\tau|)^{\sigma+1/2}
\mathrm{d}\tau
\ll BT_1^{1/2+\varepsilon}\left(\frac{Nx}{BT_1^{1+\varepsilon}}\right)^{-\sigma}
\ena
which can be negligibly small unless $Nx\ll BT_1^{1+\varepsilon}$,
in which case by taking $\sigma=-1/2$,
\bna
\mathbf{\Psi}_2^0=B^{1/2}(Nx)^{1/2+it}\int_0^{\infty}G(\tau)
\exp\left(i\eta(\tau)\right)\mathrm{d}\tau,
\ena
where, temporarily,
\bna
G(\tau)=\frac{\pi^{2it-1}}{2\sqrt{\tau}}V^{\natural}_{\sigma}\left(\frac{\tau}{\pi}\right)\big(h_{\sigma,1}(B\tau-T_1)h_{\sigma,1}(B\tau-T_2)
\pm h_{\sigma,2}(B\tau-T_1)h_{\sigma,2}(B\tau-T_2)\big)
\ena
and
\bna
\eta(\tau)=-B\tau\log \frac{Nx}{e^2}+(B\tau-T_1)\log\frac{|T_1-B\tau|}{2e}
+(B\tau-T_2)\log\frac{|T_2-B\tau|}{2e}-2B\tau\log \tau.
\ena
Note that
\bna
\eta'(\tau)&=&B\log\frac{|T_1-B\tau||T_2-B\tau|}{4Nx\tau^2},\\
\eta''(\tau)&=&B\left(\frac{1}{\tau-T_1/B}
+\frac{1}{\tau-T_2/B}-\frac{2}{\tau}\right)
\ena
and
\bna
\int_{\min\{|B\tau-T_1|,|B\tau-T_2|\}>\sqrt{B} }
\left|\frac{\mathrm{d}G(\tau)}{\mathrm{d}\tau}\right|\mathrm{d}\tau
\ll \max_{\tau\asymp 1}\left\{
\frac{B}{|B\tau-T_1|^2}
,\frac{B}{|B\tau-T_2|^2},1\right\}\ll 1.
\ena
Moreover, for $\min_{\tau\asymp 1}\{|B\tau-T_1|,|B\tau-T_2|\}>\sqrt{B}$,
\bna
\eta''(\tau)\asymp \left\{\begin{array}{lr}B\max\limits_{\tau\asymp 1}
|\tau-T_1/B|^{-1},&\mathrm{if}\, T_1^{1-\varepsilon}\ll B\ll T_1^{1+\varepsilon}\\
B,&\mathrm{if}\, T_2^{1+\varepsilon}\ll B\ll T_1^{1-\varepsilon},\\
B\max\limits_{\tau\asymp 1}|\tau-T_2/B|^{-1},
&\mathrm{if}\, T_2^{1-\varepsilon}\ll B\ll T_2^{1+\varepsilon}.
\end{array}\right.
\ena
Then by Lemma \ref{lem: 2st derivative test, dim 1},
\bna
&&B^{1/2}(Nx)^{1/2+it}\int_{\min\{|B\tau-T_1|,|B\tau-T_2|\}>\sqrt{B} }G(\tau)
\exp\left(i\eta(\tau)\right)\mathrm{d}\tau\\
&\ll& (Nx)^{1/2}\left\{\begin{array}{lr}\min\limits_{|B\tau-T_1|>\sqrt{B},\tau\asymp 1}
|\tau-T_1/B|^{1/2},&\mathrm{if}\, T_1^{1-\varepsilon}\ll B\ll T_1^{1+\varepsilon}\\&\\
1,&\mathrm{if}\, T_2^{1+\varepsilon}\ll B\ll T_1^{1-\varepsilon},\\&\\
\min\limits_{|B\tau-T_2|>\sqrt{B},\tau\asymp 1}|\tau-T_2/B|^{1/2},
&\mathrm{if}\, T_2^{1-\varepsilon}\ll B\ll T_2^{1+\varepsilon}.
\end{array}\right.\\
&\ll&(Nx)^{1/2}.
\ena
Trivially, we have
\bna
B^{1/2}(Nx)^{1/2+it}\int_{\min\{|B\tau-T_1|,|B\tau-T_2|\}\leq \sqrt{B} }G(\tau)
\exp\left(i\eta(\tau)\right)\mathrm{d}\tau\ll (Nx)^{1/2}.
\ena
Assembling the above results, we conclude that
\bea\label{p-3}
\mathbf{\Psi}_2^0\ll (Nx)^{1/2}.
\eea
Then the first statement follows from \eqref{p-1},\eqref{The integral:20}, \eqref{p-2} and \eqref{p-3}.

(2) For the second statement in  Lemma  \ref{integral-lemma-1}, we
take $\sigma=-1/2$ in \eqref{The integral:1}, we have
\bea\label{The integral:2}
\Psi_{\varphi}^{\pm}(x,n,q)&=&\frac{B^{1/2}}{2\pi^2}\int_0^{\infty}
(\pi^2 Nx)^{1/2-iB\tau+it}\gamma_f^{\pm}\left(-\frac{1}{2}+iB\tau-it\right)\nonumber\\&&\times
\tau^{-1/2}V^{\natural}\left(\frac{\tau}{\pi}\right)
e\left(-\frac{B\tau}{\pi}\log\frac{\tau}{\pi e}\right)\mathrm{d}\tau+O_A\left(N^{-A}\right).
\eea
where $V^{\natural}(x)=V^{\natural}_{-1/2}(x)$ and by \eqref{Gamma3},
\bna
\gamma_f^{\pm}\left(-\frac{1}{2}+iB\tau-it\right)=
\prod_{j=1,2}\frac{\Gamma\left(\frac{1/2+i(B\tau-T_j)}{2}\right)}
{\Gamma\left(\frac{1/2-i(B\tau-T_j)}{2}\right)}\pm
\prod_{j=1,2}\frac{\Gamma\left(\frac{3/2+i(B\tau-T_j)}{2}\right)}
{\Gamma\left(\frac{3/2-i(B\tau-T_j)}{2}\right)}.
\ena
Since $B\ll T_2^{1-\varepsilon}$, using Stirling's approximation
in \eqref{Stirling approximation}, we derive
\bea\label{gamma-2}
\begin{split}
&\gamma_f^{\pm}\left(-\frac{1}{2}+iB\tau-it\right)
=\left(\frac{T_1-B\tau}{2e}\right)^{i(B\tau-T_1)}
\left(\frac{|T_2-B\tau|}{2e}\right)^{i(B\tau-T_2)}\\&\qquad\times
\big(h_{1}(B\tau-T_1)h_1(B\tau-T_2)\pm h_2(B\tau-T_1)h_2(B\tau-T_2)\big)+
O_{K_4}\big(T_1^{-K_4}\big),
\end{split}
\eea
where $h_j(x)$, $j=1,2$, satisfying $h_j(x)\ll_j1$ and
$
x^{\ell}h_{j}^{(\ell)}(x)\ll_{j,\ell,K_4} x^{-1}
$
for any integer $\ell\geq 1$. Plugging \eqref{gamma-2} into \eqref{The integral:2}, one has
\bea\label{The integral:3}
\Psi_{\varphi}^{\pm}(x,n,q)=B^{1/2}(Nx)^{1/2+it}\int_0^{\infty}
V_0^{\pm}(\tau)\exp\left(i\varrho_0(\tau)\right)\mathrm{d}\tau+O_A\left(N^{-A}\right),
\eea
where
\bna
V_0^{\pm}(\tau)=\frac{\pi^{2it-1}}{2\sqrt{\tau}}V^{\natural}\left(\frac{\tau}{\pi}\right)
\big(h_{1}(B\tau-T_1)h_1(B\tau-T_2)\pm h_2(B\tau-T_1)h_2(B\tau-T_2)\big)
\ena
satisfying $\mathrm{d}^{\ell}V_0^{\pm}(\tau)/\mathrm{d}\tau^{\ell}\ll_{\ell} 1$ for any integer $\ell\geq 0$, and
\bea\label{varrho-definition}
\varrho_0(\tau)=-B\tau\log \frac{Nx}{e^2}+(B\tau-T_1)\log\frac{T_1-B\tau}{2e}
+(B\tau-T_2)\log\frac{|T_2-B\tau|}{2e}-2B\tau\log \tau.
\eea
We compute
\bna
\varrho'_0(\tau)&=&B\log\frac{(T_1-B\tau)|T_2-B\tau|}{4Nx\tau^2},\\
\varrho_0^{(j)}(\tau)&=&B(-1)^j(j-2)!\left(\frac{1}{(\tau-T_1/B)^{j-1}}
+\frac{1}{(\tau-T_2/B)^{j-1}}-\frac{2}{\tau^{j-1}}\right)\asymp B,\quad j=2,3,\ldots.
\ena
By repeated integration by parts one shows that $\Psi_{\varphi}^{\pm}(x,n,q)$
is negligibly small unless $Nx\asymp T_1|T_2|$.
Denote $C_{\alpha}^j=\alpha(\alpha-1)\cdot\cdot\cdot (\alpha-j+1)/j!$.
By an iterative argument, the stationary point $\tau_*$ which is the solution to
the equation $\varrho'_0(\tau)=0$, i.e.,
$4Nx\tau^2=T_1|T_2|(1-T_1^{-1}B\tau)(1-T_2^{-1}B\tau)$, can be written as
\bea\label{stationary point-1}
\tau_*&=&\left(\frac{T_1|T_2|}{4Nx}\right)^{1/2}\left(1-\frac{B}{T_1}
\tau_*\right)^{1/2}\left(1-\frac{B}{T_2}
\tau_*\right)^{1/2}\nonumber\\
&=&\left(\frac{T_1|T_2|}{4Nx}\right)^{1/2} \bigg( \sum_{j=0}^{K_5}
C_{1/2}^{j}\left(\frac{-B}{T_1}\right)^j\tau_*^{j}
+O_{K_5}\left((B/T_1)^{K_5+1}\right) \bigg) \nonumber\\&&\times
\bigg( \sum_{j=0}^{K_6}
C_{1/2}^{j}\left(\frac{-B}{|T_2|}\right)^j\tau_*^{j}
+O_{K_6}\left((B/|T_2|)^{K_6+1}\right) \bigg) \nonumber\\
&=&\sum_{j=0}^{K}\tau_j+O_{K_7}\left((B/|T_2|)^{K_7+1}\right),
\eea
where
\bna
\tau_0&=&\left(\frac{T_1|T_2|}{4Nx}\right)^{1/2} \asymp 1,\\
\tau_1&=&\frac{-1}{2}\left(\frac{B}{T_1}+\frac{B}{|T_2|}\right)\tau_0^2\asymp \frac{B}{|T_2|},\\
\tau_2&=&\frac{-1}{8}\left(\frac{B^2}{T_1^2}-\frac{2B^2}{T_1|T_2|}+\frac{B^2}{|T_2|^2}\right)\tau_0^3\asymp \frac{B^2}{|T_2|^2},\\
\tau_{j}&=&f_j\left(\frac{B}{T_1},\frac{B}{|T_2|}\right)\tau_0^{j+1}
\asymp \left(\frac{B}{|T_2|}\right)^{j}, j=3,4\ldots,
\ena
for some homogeneous polynomials $f_j(y_1,y_2)$ of degree $j$.
Since $B\ll T_2^{1-\varepsilon}$, the
$O$-term in \eqref{stationary point-1} is $O(N^{-\varepsilon K_7 })$, which
can be arbitrarily small by taking $K_7$ sufficiently large.

Applying Lemma \ref{lemma:exponentialintegral} (2) with $X=Z=1$ and
$Y=R=\tau\gg N^{\varepsilon}$, we obtain
\bna
\int_0^{\infty}
V_0^{\pm}(\tau)\exp\left(i\varrho_0(\tau)\right)\mathrm{d}\tau=
B^{-1/2}V_{\natural}^{\pm}(\tau_*)e^{i\varrho_0(\tau_*)}+ O_{A}(  N^{-A}),
\ena
where $V_{\natural}^{\pm}(\tau)$ is some inert function
supported on $\tau\asymp 1$.
From \eqref{varrho-definition} and \eqref{stationary point-1} and
using Taylor series expansion $\log(1-y)=-\sum_{j=1}^{\infty}y^j/j, y\in (-1,1)$, we have
\bna
\varrho_0(\tau_*)&=&B\tau_*\log\frac{(T_1-B\tau_*)|T_2-B\tau_*|}{4Nx\tau_*^2}
-T_1\log\frac{T_1-B\tau_*}{2e}
-T_2\log\frac{|T_2-B\tau_*|}{2e}\\
&=&-T_1\log\frac{T_1-B\tau_*}{2e}
-T_2\log\frac{|T_2-B\tau_*|}{2e}\\
&=&-T_1\log\frac{T_1}{2e}
-T_2\log\frac{|T_2|}{2e}-T_1\log\left(1-BT_1^{-1}\tau_*\right)
-T_2\log\left(1-B T_2^{-1}\tau_*\right)\\
&=&-T_1\log\frac{T_1}{2e}
-T_2\log\frac{|T_2|}{2e}+B\left(\sum_{j=0}^{\infty}\frac{1}{j+1}\left(\left(B/T_1\right)^j+
\left(B/T_2\right)^j\right)\tau_*^{j+1}\right)\\
&=&-T_1\log\frac{T_1}{2e}
-T_2\log\frac{|T_2|}{2e}+B\sum_{j=0}^{K}g_j\left(\frac{B}{T_1},\frac{B}{T_2}\right)\tau_0^{j+1}
+O_{K}\left((B/|T_2|)^{K+1}\right)
\ena
for some homogeneous polynomials $g_j(y_1,y_2)$ of degree $j$ and satisfying
$g_j(y_1,y_2)\asymp y_2^{j}$ for any integer $j\geq 0$. In particular,
$g_0(y_1,y_2)=2$.
Hence,
\bea\label{The integral:4}
\int_0^{\infty}
V_0(\tau)\exp\left(i\varrho_0(\tau)\right)\mathrm{d}\tau&=&
B^{-1/2}V_{\natural}^{\pm}(\tau_*)e\left(-\frac{T_1}{2\pi}\log\frac{T_1}{2e}
-\frac{T_2}{2\pi}\log\frac{|T_2|}{2e}\right.\nonumber\\
&&\left.+\frac{B}{2\pi}\sum_{j=0}^{K}g_j\left(\frac{B}{T_1},\frac{B}{T_2}\right)\tau_0^{j+1}\right)
+ O_{A}(  N^{-A}),
\eea
where $A>0$ is a large constant depends on $K$. By \eqref{The integral:3} and \eqref{The integral:4},
\bna
\Psi_{\varphi}^{\pm}(x,n,q)&=&(Nx)^{1/2+it}V_{\natural}^{\pm}(\tau_*)e\bigg(-\frac{T_1}{2\pi}\log\frac{T_1}{2e}
-\frac{T_2}{2\pi}\log\frac{|T_2|}{2e}\\&&
+\frac{B}{2\pi}\sum_{j=0}^{K}g_j\bigg(\frac{B}{T_1},\frac{B}{T_2}\bigg)\tau_0^{j+1}\bigg)
+ O_{A}(  N^{-A}).
\ena
This proves the second statement of the lemma.

(3) For $B\gg T_1^{1+\varepsilon}$, the proof is similar as that of (2) and we will be brief.
In this case, the formula \eqref{The integral:3} still holds.
Thus repeated integration by parts shows that $\Psi_{\varphi}^{\pm}(x,n,q)$
is negligibly small unless $Nx\asymp T_1|T_2|$.
Note that the total variation of $V_0^{\pm}(\tau)$ is bounded by 1 and the second
derivative of the phase function is of size $B$.
By the second derivative test in Lemma \ref{lem: 2st derivative test, dim 1}, we have
\bna
\Psi_{\varphi}^{\pm}(x,n,q)\ll (Nx)^{1/2}.
\ena
This finishes the proof of the lemma.

\end{proof}

Next we prove Lemma \ref{integral:lemma}.
\begin{proof}[Proof of Lemma \ref{integral:lemma}]
The proof is similar to \cite[Lemma 4.2]{HSZ}.
Recall \eqref{H-integral} which we relabel as
\bea\label{H-relabel}
\mathcal{H}(x)=\int_{\mathbb{R}}
\omega\left(\xi\right)
\mathfrak{I}^{\pm}\left(C^2T_1|T_2|\xi/N,n_1,q\right)
\overline{\mathfrak{I}^{\pm}\left(C^2T_1|T_2|\xi/N,n_2,q\right)}
\, e\left(-x\xi\right)\mathrm{d}\xi,
\eea
where by \eqref{I-definition},
\bea\label{I-definition2}
\mathfrak{I}^{\pm}(C^2T_1|T_2|\xi/N,n,q)=V_{\natural}^{\pm}(\tau_*)
e\bigg(\frac{n^{1/2}N^{1/2}}{\pi C\xi^{1/2}}
+\frac{B}{2\pi}\sum_{j=1}^{K}g_j\bigg(\frac{B}{T_1},\frac{B}{T_2}\bigg)
\left(\frac{q}{2C}\right)^{j+1}\xi^{-(j+1)/2}\bigg).
\eea
Trivially, one has
\bna
\mathcal{H}(x)\ll 1.
\ena
This proves the first statement of Lemma \ref{integral:lemma}.

Plugging \eqref{I-definition2} into \eqref{H-relabel}, we obtain
\bna
&&\mathcal{H}(x)=
\int_{\mathbb{R}}
\omega\left(\xi\right)V_{\natural}^{\pm}(\tau_*)\overline{V_{\natural}^{\pm}(\tau_*')}
e\left(-x\xi+\frac{(n_1^{1/2}-n_2^{1/2})N^{1/2}}{\pi C\xi^{1/2}}\right)\\
&&\qquad\qquad\times e\left(\frac{1}{2\pi}\sum_{j=1}^{K}
\bigg(Bg_j\bigg(\frac{B}{T_1},\frac{B}{T_2}\bigg)
-B'g_j\bigg(\frac{B'}{T_1},\frac{B'}{T_2}\bigg)\bigg)
\left(\frac{q}{2C}\right)^{j+1}\xi^{-(j+1)/2}\right)
\mathrm{d}\xi,
\ena
where $\tau_*$ are as in \eqref{stationary point-1} with $B=2 n_1^{1/2}N^{1/2}/q$,
$\tau_*$ is defined in the same way but with $B'=2 n_2^{1/2}N^{1/2}/q$.
Note that the first derivative of
the phase function in the above integral equals
\bea\label{1st phase function}
&&-x-\frac{(n_1^{1/2}-n_2^{1/2})N^{1/2}}{2\pi C\xi^{3/2}}\nonumber\\&&
-\frac{1}{2\pi}\sum_{j=1}^{K}\bigg(\frac{j+1}{2}\bigg)
\bigg(Bg_j\bigg(\frac{B}{T_1},\frac{B}{T_2}\bigg)
-B'g_j\bigg(\frac{B'}{T_1},\frac{B'}{T_2}\bigg)\bigg)
\left(\frac{q}{2C}\right)^{j+1}\xi^{-(j+3)/2}
\eea
which is $\gg |x|\gg N^{\varepsilon}$
if $|x|\gg N^{1+\varepsilon}\Xi/(CQ)$
since $n_i\sim N\Xi^2/Q^2$, $i=1,2$ and \eqref{zeta range}.
Then repeated integration by parts shows that
the contribution from $x\gg  N^{1+\varepsilon}\Xi/(CQ)$ is negligible.
Thus the second statement of Lemma \ref{integral:lemma} is clear.

Moreover, the second term in \eqref{1st phase function}
is of size
\bna
\frac{N^{1/2}}{C}|n_1^{1/2}-n_2^{1/2}|\asymp \frac{Q}{C\Xi}|n_1-n_2|
\ena
since $n_i\sim N\Xi^2/Q^2$, $i=1,2$. Thus repeated integration by parts shows that
$\mathcal{H}(x)$ is negligibly small unless $|x|\asymp \frac{Q}{C\Xi}|n_1-n_2|$.
Now by applying the second derivative test
in Lemma \ref{lem: 2st derivative test, dim 1}, we infer that for $x\neq 0$,
\bna
\mathcal{H}(x)\ll |x|^{-1/2}.
\ena
This proves (3).

Finally, for $x=0$, using the fact $g_j(y_1,y_2)$ are some homogeneous polynomials of degree $j$
and satisfy
$g_j(y_1,y_2)\ll_j y_2^{j}$ for any integer $j\geq 1$ and the identity
$a^{j+1}-b^{j+1}=(a-b)(a^j+a^{j-1}b+\cdots+ab^{j-1}+b^j)$, one sees that, for $j\geq 1$,
\bna
&&Bg_j\bigg(\frac{B}{T_1},\frac{B}{T_2}\bigg)
-B'g_j\bigg(\frac{B'}{T_1},\frac{B'}{T_2}\bigg)\\
&=&B\sum_{j_1+j_2=j}\left(\frac{B}{T_1}\right)^{j_1}\left(\frac{B}{T_2}\right)^{j_2}
-B'\sum_{j_1+j_2=j}\left(\frac{B'}{T_1}\right)^{j_1}\left(\frac{B'}{T_2}\right)^{j_2}\\
&=&(B^{j+1}-B'^{j+1})\sum_{j_1+j_2=j}T_1^{-j_1}T_2^{-j_2}\\
&\ll&|B-B'|(B_j+B'^j)T_2^{-j}\\
&\ll& |B-B'|N^{-\varepsilon}.
\ena
Thus the first derivative
of the phase function in \eqref{1st phase function} is
\bna
\gg |B-B'|\asymp \frac{Q}{C\Xi}|n_1-n_2|.
\ena
By repeated integration by parts, $\mathcal{H}(0)$
is negligible small unless $|n_1-n_2|\ll C\Xi N^{\varepsilon}/Q$.
Since $\Xi\ll N^{\varepsilon}$ and $C\ll Q$, we have that
$\mathcal{H}(0)$
is negligibly small unless $|n_1-n_2|\ll N^{\varepsilon}$.
This completes the proof of Lemma \ref{integral:lemma}.
\end{proof}

  \begin{bibdiv}

\begin{biblist}

\bib{ASS}{article} {
author = {Acharya, Ratnadeep}
    author = {Sharma, Prahlad},
  author = {Singh,  Saurabh Kumar}
     title = {$t$-aspect subconvexity for $\rm GL(2) \times \rm GL(2)$ $L$-function },
   note={\url{arXiv:2011.01172}},
   date={2020},
}

\bib{AHLQ}{article} {
    author = {Aggarwal, Keshav},
    author={Holowinsky, Roman},
    author={Lin, Yongxiao},
    author={Qi, Zhi},
     title = {A Bessel delta-method and exponential sums for {$\rm GL(2)$}},
    journal = {Q. J. Math.},
    volume={71},
   date={2020},
   number={3},
   pages={1143--1168},
    doi = {10.1093/qmathj/haaa026},
}

\bib{BR}{article} {
    author = {Bernstein, Joseph},
    author={Reznikov, Andre},
     title = {Subconvexity bounds for triple $L$-functions
and representation theory},
    journal = {Ann. of Math. (2) },
    volume={172},
   date={2010},
   number={3},
   pages={1679--1718},
}

\bib{BJN}{article}{
   author={Blomer, Valentin},
   author={Jana, Subhajit },
   author={Nelson, Paul},
   title={The Weyl bound for triple product $L$-functions },
   note={\url{arXiv:2101.12106}},
   date={2021},
}

\bib{BKY}{article}{
   author={Blomer, Valentin},
   author={Khan, Rizwanur},
   author={Young, Matthew},
   title={Distribution of mass of holomorphic cusp forms},
   journal={Duke Math. J.},
   volume={162},
   date={2013},
   number={14},
   pages={2609--2644},
   issn={0012-7094},
   doi={10.1215/00127094-2380967},
}

\bib{Del}{article}{
   author={Deligne, Pierre},
   title={La conjecture de Weil. I},
   language={French},
   journal={Inst. Hautes \'{E}tudes Sci. Publ. Math.},
   number={43},
   date={1974},
   pages={273--307},
   issn={0073-8301},
}

\bib{DFI2}{article}{
   author={Duke, W.},
   author={Friedlander, J. B.},
   author={Iwaniec, H.},
   title={The subconvexity problem for Artin L-functions},
   journal={Invent. Math.},
   volume={149},
   number={3},
   date={2002},
   pages={489--577},
}

\bib{HM}{article}{
   author={Harcos, G.},
   author={Michel, P.},
   title={The subconvexity problem for Rankin–Selberg L-functions and equidistribution
of Heegner points. II.},
   journal={Invent. Math.},
   volume={163},
   date={2006},
   number={3},
   pages={581--655},
}

\bib{HT}{article}{
   author={Holowinsky, R.},
   author={Templier, N.},
   title={First moment of Rankin–Selberg central L-values and subconvexity in the
level aspect},
   journal={Ramanujan J.},
   volume={33},
   date={2014},
   number={1},
   pages={131--155},
}

\bib{HB}{article}{
   author={Huang, Bingrong},
   title={On the Rankin-Selberg problem},
   journal={Math. Ann.},
   date={2021},
   doi={10.1007/s00208-021-02186-7},
}

\bib{HB2}{article}{
   author={Huang, Bingrong},
   title={Uniform subconvexity bounds for $GL(3)\times GL(2)$ $L$-functions },
   note={\url{arXiv:2104.13025}},
   date={2021},
}

\bib{HSZ}{article}{
  author={Huang, Bingrong},
   author={Sun, Qingfeng},
   author={Zhang, Huimin},
   title={Analytic twists of $\rm GL2 \times \rm GL_2$ automorphic forms },
   note={\url{arXiv:2108.09410}},
   date={2021},
}

\bib{Hux2}{book}{
   author={Huxley, M. N.},
   title={Area, lattice points, and exponential sums},
   series={London Mathematical Society Monographs. New Series},
   volume={13},
   note={Oxford Science Publications},
   publisher={The Clarendon Press, Oxford University Press, New York},
   date={1996},
   pages={xii+494},
   isbn={0-19-853466-3},
}

\bib{IK}{book}{
   author={Iwaniec, Henryk},
   author={Kowalski, Emmanuel},
   title={Analytic number theory},
   series={American Mathematical Society Colloquium Publications},
   volume={53},
   publisher={American Mathematical Society, Providence, RI},
   date={2004},
   pages={xii+615},
   isbn={0-8218-3633-1},
   doi={10.1090/coll/053},
}

\bib{JM1}{article}{
   author={Jutila, Matti},
   author={Motohashi, Yoichi},
   title={Uniform bound for Hecke $L$-functions},
   series={Acta Math.},
   volume={195},
   publisher={American Mathematical Society, Providence, RI},
   date={2005},
   pages={61--115},
}

\bib{JM}{article}{
   author={Jutila, Matti},
   author={Motohashi, Yoichi},
   title={Uniform bounds for Rankin-Selberg $L$-functions,
   Multiple Dirichlet series, automorphic forms, and analytic number theory},
   series={Proc. Sympos. Pure Math.},
   volume={75},
   publisher={American Mathematical Society, Providence, RI},
   date={2006},
   pages={243--256},
}

\bib{KS}{article}{
   author={Kim, Henry H.},
   author={Sarnak, Peter},
   title={Appendix 2 in Functoriality for the exterior square of
$\rm GL_4$ and the symmetric fourth of $\rm GL_2$},
   journal={J. Amer. Math. Soc.},
   volume={16},
   date={2003},
   number={1},
   pages={139--183},
}

\bib{KPY}{article}{
   author={Kiral, Eren Mehmet},
   author={Petrow, Ian},
   author={Young, Matthew P.},
   title={Oscillatory integrals with uniformity in parameters},
   language={English, with English and French summaries},
   journal={J. Th\'{e}or. Nombres Bordeaux},
   volume={31},
   date={2019},
   number={1},
   pages={145--159},
   issn={1246-7405},
}

\bib{KMV}{article}{
   author={Kowalski, E.},
   author={Michel, Ph.},
   author={VanderKam, J.},
   title={Rankin--Selberg $L$-functions in the level aspect},
   journal={Duke Math. J.},
   volume={114},
   date={2002},
   number={1},
   pages={123--191},
   issn={0012-7094},
   doi={10.1215/S0012-7094-02-11416-1},
}

\bib{LLY}{article}{
   author={Lau, Y.-K.},
    author={Liu, J.},
    author={Ye, Y.},
   title={A new bound $k^{2/3+\varepsilon}$ for Rankin-Selberg L-functions for Hecke congruence sub-
groups},
   journal={Int. Math. Res. Pap. 2006},
   note={Art. ID 35090, 78 pp},
}

\bib{LS}{article}{
   author={Lin, Yongxiao},
   author={Sun, Qingfeng},
   title={Analytic twists of $\rm GL_3 \times \rm GL_2$ automorphic forms},
   journal={Int. Math. Res. Not.},
   volume={2021},
   pages={15143--15208},
   doi={10.1093/imrn/rnaa348},
}

\bib{M}{article}{
   author={Michel, P.},
   title={The subconvexity problem for Rankin–Selberg L-functions and equidistribution of Heegner
points},
   journal={Ann. of Math. (2)},
   volume={160},
   date={2004},
   number={1},
   pages={185--236},
}

\bib{MV}{article}{
   author={Michel, P.},
   author={Venkatesh, A.},
   title={The subconvexity problem for $\rm GL_2$ },
   journal={Publ. Math. Inst. Hautes \'{E}tudes Sci.},
   date={2010},
   number={111},
   pages={171--271},
}

\bib{MS}{article}{
   author={Miller, S.D.},
   author={Schmid, W.},
   title={Automorphic distributions, L-functions, and Voronoi summation for $GL(3)$},
   journal={Ann. of Math. (2)},
   volume={164},
   date={2006},
   number={2},
   pages={423--488},
}

\bib{Mun1}{article}{
   author={Munshi, Ritabrata},
   title={The circle method and bounds for $L$-functions---III: $t$-aspect
   subconvexity for $GL(3)$ $L$-functions},
   journal={J. Amer. Math. Soc.},
   volume={28},
   date={2015},
   number={4},
   pages={913--938},
   issn={0894-0347},
   doi={10.1090/jams/843},
}
	
\bib{N}{article}{
   author={Nelson, Paul D.},
   title={Bounds for standard L-functions },
   note={\url{arXiv:2109.15230}},
   date={2021},
}

\bib{O}{book}{
   author={Olver, Frank W. J.},
   title={Asymptotics and special functions},
   publisher={AKP Classics. A K Peters, Ltd., Wellesley, MA},
   date={1997},
   pages={xviii+572},
}

\end{biblist}

\end{bibdiv}

\end{document}